\title{\bf{A characterization of the Artin-Mumford curve}}
\author{N.~Arakelian\footnote{This paper was written when the first author was visiting the Universt\`a Degli Studi Della Basilicata. This visit was supported by FAPESP-Brazil (grant 2014/03497-6). } \   and G.~Korchm\'aros}
\newtheorem{theorem}{Theorem}[section]
\newtheorem{proposition}[theorem]{Proposition}
\newtheorem{lemma}[theorem]{Lemma}
\newtheorem{corollary}[theorem]{Corollary}
\theoremstyle{definition}
\newtheorem*{definition*}{Definition}
\newtheorem*{proposition*}{Proposition}
\newtheorem*{corollary*}{Corollary}
\newtheorem*{lemma*}{Lemma}
\def\cC{\mathcal C}
\def\cF{\mathcal F}
\def\cH{\mathcal H}
\def\cM{\mathcal M}
\def\cX{\mathcal X}
\def\cY{\mathcal Y}
\def\l{\ell}
\def\Aut{\mbox{\rm Aut}}
\def\Gal{\mbox{\rm Gal}}
\def\deg{\mbox{\rm deg}}
\def\Aut{\mbox{\rm Aut}}
\newcommand{\PGL}{\mbox{\rm PGL}}
\newcommand{\GL}{\mbox{\rm GL}}
\newcommand{\aut}{\mbox{\rm Aut}}
\newcommand{\g}{\gamma}
\begin{document}
\maketitle

\begin{abstract} Let $\cM$ be the Artin-Mumford curve over the finite prime field $\mathbb{F}_p$ with $p>2$. By a result of Valentini and Madan, $\aut_{\mathbb{F}_p}(\cM)\cong H$ with $H=(C_p\times C_p)\rtimes D_{p-1}$. We prove that if $\cX$ is an algebraic curve of genus $\textsf{g}=(p-1)^2$ such that $\aut_{\mathbb{F}_p}(\cX)$ contains a subgroup isomorphic to $H$ then $\cX$ is birationally equivalent over $\mathbb{F}_p$ to the Artin-Mumford curve $\cM$.
\end{abstract}
%

{\small \emph{Keywords: Algebraic curves, Automorphim groups, finite fields.}}

\section{Introduction}\label{s1}
The Artin-Mumford curve $\cM_c$ over a field $\mathbb{F}$ of characteristic $p>2$ is the (projective, nonsingular, geometrically irreducible) algebraic curve birationally equivalent over $\mathbb{F}$ to the plane curve with affine equation
\begin{equation}
\label{31octa}
(x^p-x)(y^p-y)=c, \ \ \ \ c \in \mathbb{F}^{*}.
\end{equation}
It is known that $\cM_c$ has genus $\textsf{g}=(p-1)^2$ and that its $\mathbb{F}$-automorphism group
$\aut_{\mathbb{F}}(\cM_c)$ contains a subgroup
\begin{equation}\label{gro}
H=(C_p\times C_p)\rtimes D_{p-1},
\end{equation}
where $C_p$ is a cyclic group of order $p$, $D_{p-1}$ is a dihedral group of order $2(p-1)$, and $(C_p\times C_p)\rtimes D_{p-1}$ is the semidirect product of $C_p\times C_p$ by $D_{p-1}$.

Artin-Mumford curves, especially over non-Archimedean valued fields of positive characteristic, have been investigated in several papers; see \cite{Co-Ka2003-1,Co-Ka-Ko2001,Co-Ka2004}. By a result of Cornelissen, Kato and Kontogeorgis \cite{Co-Ka-Ko2001}, $\aut_{\mathbb{F}}(\cM_c)=H$ for any non-Archimedean valued field $(\mathbb{F}, |\cdot|)$ of positive characteristic, provided that $|c|<1$. Valentini and Madan \cite{valentinimadan} proved the same result  $\aut_{\mathbb{F}}(\cM_c)=H$ if $\mathbb{F}$ is an algebraically closed field of characteristic $p>0$.

In this paper we study the $\mathbb{F}$-automorphism group of the Artin-Mumford curve $\cM_c$ in the case where $\mathbb{F}$ is the finite prime field $\mathbb{F}_p$ with $p>2$. Since $\cM_c$ is $\mathbb{F}_p$-birationally equivalent to $\cM_1$ for all $c \in \mathbb{F}_p^{*}$, we set $\cM=\cM_1$. Our main result stated in the following theorem shows that $\Aut_{\mathbb{F}_p}(\cM)$ characterizes $\cM$.

\begin{theorem}\label{main}
Let $\cX$ be a (projective, nonsingular, geometrically irreducible algebraic) curve of genus $\textsf{g}=(p-1)^2$ defined over $\mathbb{F}_p$ with $p>2.$ If $\Aut_{\mathbb{F}_p}(\cX)$ contains a subgroup $H$ isomorphic to (\ref{gro}) then $\cX$ is birationally equivalent over $\mathbb{F}_p$ to the Artin-Mumford curve $\cM$.
\end{theorem}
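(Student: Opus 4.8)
The plan is to exploit the normal elementary-abelian subgroup $V=C_p\times C_p$ of $H$ and to recover $\cX$ as the fibre product of two Artin--Schreier covers of the line. Since $|H|=2p^2(p-1)$ and $p\nmid 2(p-1)$ for $p>2$, the subgroup $V$ is the unique Sylow $p$-subgroup of $H$; hence $V\trianglelefteq H$ and $V=O_p(H)$. First I would analyse the quotient map $\pi\colon\cX\to\bar\cX=\cX/V$ and its ramification. Because $V$ is elementary abelian and $\overline{\mathbb{F}}_p{}^{*}$ has no $p$-torsion, the tame part of every inertia group is trivial, so each non-trivial stabiliser is one of the $p+1$ subgroups $U$ of order $p$ (or possibly $V$ itself), acting with purely wild filtration $G_0=G_1=U$; in the extremal case $G_2=1$ the different exponent is $d_P=2(p-1)$. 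Writing $\bar{\textsf{g}}$ for the genus of $\bar\cX$, Riemann--Hurwitz reads
\begin{equation*}
2(p-1)^2-2=p^2\bigl(2\bar{\textsf{g}}-2\bigr)+\deg\mathfrak{D},
\end{equation*}
and the goal of this step is to show that the only configuration compatible with a faithful $V$-action and genus $(p-1)^2$ is $\bar{\textsf{g}}=0$ together with exactly two ramified $V$-orbits, each of size $p$, carrying distinct inertia groups $U_1\ne U_2$ and total different $\deg\mathfrak{D}=4p(p-1)$.

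To force this I would run Riemann--Hurwitz together with the Deuring--Shafarevich formula for the $p$-rank of the $V$-cover: the latter bounds $\sum_{Q}(e_Q-1)$ in terms of $\bar{\textsf{g}}$ and the $p$-rank of $\cX$ (the Artin--Mumford curve being ordinary, with $p$-rank equal to its genus), and the two formulas together exclude larger ramification jumps, inertia of order $p^2$, and additional branch orbits. The hypothesis that $V$ is \emph{elementary} abelian is what keeps the local analysis tractable, since no cyclic $p^2$-inertia can occur. Once $\bar\cX\cong\mathbb{P}^1$ is established, the complement acts: as $V\trianglelefteq H$, the quotient $D_{p-1}=H/V$ acts on $\bar\cX=\mathbb{P}^1$ as a subgroup of $\PGL(2,\overline{\mathbb{F}}_p)$ permuting the two branch points. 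Its cyclic subgroup $C_{p-1}$ fixes both branch points and, having order exactly $p-1$, is a split (diagonalisable over $\fp$) torus of $\PGL(2,\fp)$ for $p\ge 5$; hence the two branch points are individually $\fp$-rational and may be normalised to $0$ and $\infty$, while the reflection in $D_{p-1}$ interchanges them and acts as $z\mapsto c/z$. The same reflection interchanges $U_1\leftrightarrow U_2$, which are precisely the two $C_{p-1}$-invariant order-$p$ subgroups of $V$ (the eigenlines of the $\diag(\lambda,\lambda^{-1})$-action); the remaining small cases, in particular $p=3$, would be checked directly.

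With this in hand I would reconstruct the function field. Each coordinate quotient $\cX/U_i\to\bar\cX=\mathbb{P}^1_z$ is a $C_p$-cover branched over a single $\fp$-rational point with minimal conductor, hence is itself rational; write $\mathbb{P}^1_x=\cX/U_2$ and $\mathbb{P}^1_y=\cX/U_1$. Compatibility of the $C_{p-1}=\fp^{*}$-action, which is dictated by the conjugation of $D_{p-1}$ on $V$ and acts by $x\mapsto\lambda x$ on $\mathbb{P}^1_x$ and by $y\mapsto\lambda^{-1}y$ on $\mathbb{P}^1_y$, forces the two Artin--Schreier relations to be $\fp$-linearised, namely $z=a(x^p-x)$ and $c/z=b(y^p-y)$ with $a,b\in\fp^{*}$. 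Eliminating $z$ gives $(x^p-x)(y^p-y)=c'$ for some $c'\in\fp^{*}$, so the function field of $\cX$ is $\fp(x,y)$ with this relation; since $\cM_{c'}$ is $\fp$-birationally equivalent to $\cM=\cM_1$, the theorem follows.

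The main obstacle is the ramification analysis of the middle step: a priori the Riemann--Hurwitz relation admits several pairs $(\bar{\textsf{g}},\deg\mathfrak{D})$, and one must simultaneously exclude higher ramification breaks, inertia of order $p^2$, and extra branch orbits, using the elementary-abelian structure of $V$ together with the $D_{p-1}$-action that constrains the combinatorics of the branch data. The second delicate point, genuinely new compared with the algebraically closed setting of Valentini and Madan, is the descent to the prime field: one must guarantee that the branch points, the inertia subgroups, and the Artin--Schreier data are all defined over $\fp$, and it is exactly the split-torus argument for $C_{p-1}\subset\PGL(2,\fp)$ that delivers this, making essential use of the hypothesis $\mathbb{F}=\fp$.
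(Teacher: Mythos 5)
Your strategy is genuinely different from the paper's, and in outline it is sound. The common part is your first step: it coincides with Proposition \ref{pr1oct}, which establishes that the quotient $\cX/(C_p\times C_p)$ is rational and that there are exactly two short orbits of size $p$ with distinct order-$p$ stabilizers whose quotients are rational. After that the two proofs diverge. The paper never works with the two inertia subgroups directly: it quotients by a \emph{freely acting} ``diagonal'' subgroup $T$ (an eigenline of the reflection, not of the torus), shows $\cX/T$ is an ordinary hyperelliptic curve of genus $p-1$ with $D_p\times C_2$ in its automorphism group (Proposition \ref{pr3oct}), identifies it via the van der Geer--van der Vlugt Theorem \ref{tvgvv} as $y^p-y=ax+1/x$, forms a fibre product with one rational quotient, and then must kill an unwanted constant $b$ in the second Artin--Schreier equation by the delicate final argument (an auxiliary $\mathbb{F}_{p^2}$-involution plus Lemmas \ref{lem2dic} and \ref{lema2dic}, which rest on Nakajima's bound). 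Your reconstruction -- $\cX$ as the fibre product of the two rational quotients $\cX/U_1$, $\cX/U_2$ over $\cX/V\cong\mathbb{P}^1$, with split-torus equivariance forcing $z=a(x^p-x)$ and $c/z=b(y^p-y)$ -- dispenses with Theorem \ref{tvgvv}, with the hyperellipticity proposition, and with the involution/Nakajima argument all at once: the $C_{p-1}$-action kills the additive constants that the paper's last two lemmas exist to remove. That is a real simplification where it applies; the price is that your argument is not uniform in $p$, whereas the paper's is.

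There are, however, two concrete gaps. The more serious one is the assertion that $U_1,U_2$ ``are precisely the two $C_{p-1}$-invariant order-$p$ subgroups of $V$.'' What is forced a priori is only that $\{U_1,U_2\}$ is a $D_{p-1}$-stable \emph{pair} among the $p+1$ order-$p$ subgroups of $V$. The $D_{p-1}$-orbits on these subgroups have sizes $2$, $(p-1)/2$, $(p-1)/2$, so for $p>5$ the stable pair must be the eigenline pair; but for $p=5$ (which you do not flag -- your ``remaining small cases'' covers only $p=3$) and for $p=3$, the pair of diagonal subgroups is also $D_{p-1}$-stable, and in that configuration $C_{p-1}$ does \emph{not} normalize $U_1,U_2$, so your linearization step has nothing to act on. The gap is fillable, and the fix is exactly the mechanism of the paper's Proposition \ref{pr4oct}: for every order-$p$ subgroup $T\le V$ other than the two torus eigenlines there is an involution in $D_{p-1}$ whose commutator with $V$ lies in $T$, hence whose image in $\Aut(\cX/T)$ centralizes $V/T\cong C_p$; if such a $T$ were an inertia subgroup, $\cX/T$ would be rational (Riemann--Hurwitz) while $\PGL(2,\overline{\mathbb{F}}_p)$ contains no abelian subgroup of order $2p$ by Dickson's classification. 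Supplying this argument makes your identification correct for all $p\ge 3$, but it must be supplied.

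The second gap is in your first step: the parenthetical ``the Artin--Mumford curve being ordinary, with $p$-rank equal to its genus'' is circular as written, since the $p$-rank of $\cX$ is not a hypothesis and must be derived. Moreover, Riemann--Hurwitz and Deuring--Shafarevich alone do \emph{not} exclude inertia of order $p^2$: the configuration in which $V$ has a single fixed point, $\bar{\textsf{g}}=\bar{\g}=0$ and $\cX$ has $p$-rank $0$ is numerically consistent with both formulas. The paper excludes it group-theoretically: a fixed point of the normal subgroup $V$ would be fixed by all of $H$, and since point stabilizers are (p-group)$\rtimes$(cyclic), this would force $H/V\cong D_{p-1}$ to be cyclic, a contradiction; $p$-rank $1$ is then excluded by Nakajima. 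Your closing paragraph gestures at ``the $D_{p-1}$-action constraining the branch data,'' which is the right instinct, but this specific stabilizer-structure argument (not mere branch combinatorics) is what closes the case.
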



\section{Preliminary Results}\label{bg}
Notation and terminology are standard. Our principal reference is \cite{HKT}. The concepts and results we use can also be found in \cite{stichtenothbook}. In particular, we use the term of a curve to denote a projective, nonsingular, geometrically irreducible algebraic curve.

In this paper, $p$ is an odd prime number, $\mathbb{F}_p$ is the finite field of order $p$, $\cX$ is a curve defined over $\mathbb{F}_p$ and viewed as a curve over the algebraic closure $\overline{\mathbb{F}}_p$ of $\mathbb{F}_p$. Also, $\mathbb{F}_p(\cX)$ is the function field of $\cX$ over $\mathbb{F}_p$, and $\Aut_{\mathbb{F}_p}(\cX)$ is its automorphism group over $\mathbb{F}_p$. Furthermore, referring to $\mathbb{K}$ we replace $\mathbb{F}_p$ by $\mathbb{K}$ in our notation, where $\mathbb{K}$ denotes an algebraically closed field of odd characteristic $p$.

For a positive integer $d$, $C_d$ stands for a cyclic group of order $d$, and $D_d$ for a dihedral group of order $2d$.

For a subgroup $S$ of $\Aut_{\mathbb{K}}(\cX)$, the fixed field of $S$ is the subfield $\mathbb{K}(\cX)^{S}$ of $\mathbb{K}(\cX)$ which consists of all elements of $\mathbb{K}(\cX)$ fixed by every automorphism in $S$. The quotient curve $\cX/S$ of $\cX$ by $S$ is defined, up to birational equivalence, to be the curve with function field $\mathbb{K}(\cX)^{S}$. It is well known that the field extension $\mathbb{K}(\cX)|\mathbb{K}(\cX)^{S}$ is a Galois extension of degree $|S|$ with Galois group $S=\Gal(\mathbb{K}(\cX)|\mathbb{K}(\cX)^{S})$.

For $P \in \cX$, the orbit of $P$ is the set
\begin{equation}\label{orbit}
S(P)=\{\sigma(P) \ | \ \sigma \in S\}.
\end{equation}
and it is long if $|S(P)|=|S|$, otherwise short. If $\bar{P}_1, \ldots,\bar{P}_k, \in \cX/S$ are the ramified points of the cover $\cX \rightarrow \cX/S$, then the short orbits of $S$ are exactly the pointsets $\Omega_i$ of $\cX$ which lie over $\bar{P}_i$ where $i=1,\ldots,k$. All but finitely many orbits of $S$ are long. It may happen that $S$ does not have any short orbit. This case only occurs when no nontrivial element in $S$ has a fixed point in $\cX$, equivalently the extension $\mathbb{K}(\cX)|\mathbb{K}(\cX)^{S}$ is unramified.
The stabilizer of $P \in \cX$ in $S$ is the subgroup defined by
\begin{equation}\label{stab}
S_P=\{\sigma \in S \ | \ \sigma(P)=P\}.
\end{equation}
For a non-negative integer $i$, the $i$-th ramification group $S_P^{(i)}$ of $\cX$ at $P$ is
\begin{equation}\label{ithram}
S_P^{(i)}=\{\sigma \in S_P \ | \ v_P(\sigma(t)-t) \geq i+1\},
\end{equation}
where $t$ is a local parameter at $P$ and $v_P$ denotes the discrete valuation at $P$. Here $S_P=S_P^{(0)}$. Furthermore, $S_P^{(1)}$ is a normal $p$-subgroup of $S_P^{(0)}$, and the factor group $S_P^{(0)}/S_P^{(1)}$ is cyclic of order prime to $p$; see e.g. \cite[Theorem 11.74]{HKT}. In particular, if $S_P$ is a $p$-group, then $S_P=S_P^{(0)}=S_P^{(1)}$.

Let $\textsf{g}$ and $\bar{\textsf{g}}$ be the genus of $\cX$ and $\bar{\cX}=\cX/S$, respectively. The Riemann-Hurwitz genus formula is \begin{equation}\label{rhg}
2\textsf{g}-2=|S|(2\bar{\textsf{g}}-2)+\sum_{P \in \cX}\sum_{i \geq 0}\big(|S_P^{(i)}|-1\big);
\end{equation}
see \cite[Theorem 11.72]{HKT}.
 If $\l_1,\ldots,\l_k$  are the sizes of the short orbits of $S$, then (\ref{rhg}) yields
\begin{equation}\label{rhso}
2\textsf{g}-2 \geq |S|(2\bar{\textsf{g}}-2)+\sum_{i=1}^{k} \big(|S|-\l_i\big),
\end{equation}
and equality holds if $\gcd(|S_P|,p)=1$ for all $P \in \cX$; see \cite[Theorem 11.57 and Remark 11.61]{HKT}.

Let $\g$ be the $p$-rank of $\cX$. Then $0\le \g \leq \textsf{g}$, and when equality holds $\cX$ is \emph{ordinary} or \emph{general}; see \cite[Theorem 6.96]{HKT}.
For a $p$-subgroup of $\Aut_{\mathbb{K}}(\cX)$, the Deuring-Shafarevich formula is
\begin{equation}\label{dsg}
\g-1=|S|(\bar{\g}-1)+\sum_{i=1}^{k} \big(|S|-\l_i\big);
\end{equation}
where $\bar{\g}$ is the $p$-rank of $\bar{\cX}=\cX/S$; see \cite[Theorem 11.62]{HKT}.

The result below follows from \cite{geervlugt1991} and provides a characterization of certain Artin-Schreier extensions of the projective line over $\mathbb{F}_q$, where $q$ is a power of $p$.

\begin{theorem}[van der Geer - van der Vlugt]\label{tvgvv}
 The curves over $\mathbb{F}_q$ with affine equation
\begin{equation}\label{vgvv}
y^p-y=ax+\frac{1}{x},
\end{equation}
where $a \in \mathbb{F}_q^{*}$, are exactly the curves $\cC$ of genus $p-1$ defined over $\mathbb{F}_q$ with the properties:
\begin{itemize}
\item $\cC$ is hyperelliptic.
\item $\Aut_{\mathbb{F}_q}(\cC)$ contains a subgroup $\langle \iota \rangle \times D_{p}$, where $\iota$ is a hyperelliptic involution and $D_{p}$ is a dihedral group of order $2p$.
\item The fixed points of $D_p$ are  $\mathbb{F}_q$-rational.
\end{itemize}
\end{theorem}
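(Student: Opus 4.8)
The statement is an equivalence, so the plan is to prove the two implications separately, the substantive one being that every curve $\cC$ satisfying the three listed properties carries an equation of the form (\ref{vgvv}). For the easy implication I would start from $y^p-y=ax+1/x$, viewed as the Artin--Schreier cover of $\mathbb{F}_q(x)$ whose right-hand side has exactly two poles, at $x=0$ and $x=\infty$, each of order $1$; the Riemann--Hurwitz formula (\ref{rhg}) then returns genus $p-1$. Setting $u=x+1/(ax)$ gives $au=ax+1/x=y^p-y$, so $\iota\colon(x,y)\mapsto(1/(ax),y)$ fixes $y$ and has rational quotient field $\mathbb{F}_q(y)$; hence $\iota$ is a hyperelliptic involution. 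Finally $\sigma\colon(x,y)\mapsto(x,y+1)$ and $\rho\colon(x,y)\mapsto(-x,-y)$ satisfy $\rho\sigma\rho^{-1}=\sigma^{-1}$ and generate a $D_p$ commuting with $\iota$, and the two points lying over $x=0,\infty$ are its (totally ramified, hence $\mathbb{F}_q$-rational) fixed points. This settles the direction from (\ref{vgvv}) to the three properties.

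For the converse, write $D_p=\langle\sigma,\rho\rangle$ with $\sigma$ of order $p$, and pass to the quotient by $\langle\sigma\rangle$. Applying (\ref{rhg}) with $S=\langle\sigma\rangle$, using that for a $p$-group $S_P=S_P^{(0)}=S_P^{(1)}$ so that each fixed point contributes $(\mu_j+1)(p-1)$ to the different, and inserting genus $p-1$, a short numerical analysis forces the quotient $\cC/\langle\sigma\rangle$ to have genus $0$ while $\sum_j(\mu_j+1)=4$ over the set of $r$ ramified points; in particular $r\le 2$. Since $\cC/\langle\sigma\rangle$ is a genus-$0$ curve over $\mathbb{F}_q$ carrying the image of a fixed point of $D_p$, it is $\mathbb{P}^1$ over $\mathbb{F}_q$, and $\cC\to\cC/\langle\sigma\rangle$ is an Artin--Schreier extension $z^p-z=g(t)$ with $\sigma(z)=z+1$.

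The crux is to exclude $r=1$, and here I would argue group-theoretically. If $\sigma$ had a unique fixed point $P$, it would be fixed by all of $\langle\iota\rangle\times D_p$, a group of order $4p$ sitting inside the stabilizer $S_P$; but then the image of $\langle\iota\rangle\times D_p$ in the \emph{cyclic} quotient $S_P^{(0)}/S_P^{(1)}$ would be cyclic, whereas the kernel of this map is a $p$-group, hence contained in the unique Sylow $p$-subgroup $\langle\sigma\rangle$, so the image contains a copy of $(\langle\iota\rangle\times D_p)/\langle\sigma\rangle\cong C_2\times C_2$ --- a contradiction. Thus $r=2$. As the fixed points of $D_p$ lie in the two-point set $\mathrm{Fix}(\sigma)=\{P_1,P_2\}$ and are nonempty by hypothesis, $\rho$ fixes both $P_1,P_2$, which are therefore the $\mathbb{F}_q$-rational fixed points of $D_p$; placing their images at $t=\infty$ and $t=0$ yields $g(t)=at+b/t+c$ with $a,b\in\mathbb{F}_q^{*}$.

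It remains to normalize over $\mathbb{F}_q$. Because $\rho$ fixes $0,\infty$ and acts nontrivially on the quotient line, its induced action is $t\mapsto -t$, whence $\wp(\rho(z)+z)=g(-t)+g(t)=2c$; as $\rho(z)+z$ is then algebraic over $\mathbb{F}_q$ it lies in the constant field $\mathbb{F}_q$, so $2c\in\wp(\mathbb{F}_q)$ and the constant is absorbed by a shift $z\mapsto z+\gamma$ with $\gamma\in\mathbb{F}_q$. A final rescaling $t\mapsto bt$ brings the equation to $z^p-z=(ab)\,t+1/t$, i.e. the form (\ref{vgvv}). I expect the main difficulty to be exactly this control of the field of definition --- ruling out the one-point case and showing the constant $c$ is annihilated by $\rho$ \emph{over} $\mathbb{F}_q$ --- for it is here that the full dihedral symmetry and the $\mathbb{F}_q$-rationality of the fixed points, rather than the weaker cyclic or merely geometric information, are genuinely indispensable.
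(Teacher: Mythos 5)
The paper never proves Theorem~\ref{tvgvv}: it is imported wholesale from \cite{geervlugt1991} (``The result below follows from\ldots''), so there is no internal proof to compare yours against, and your proposal is in effect a self-contained substitute. Having checked it, I believe it is correct, and it uses exactly the toolkit the paper deploys elsewhere (Riemann--Hurwitz counts as in Propositions~\ref{pr1oct} and~\ref{pr3oct}, cyclicity of $S_P^{(0)}/S_P^{(1)}$, and the Hasse/Artin--Schreier normalization that the paper itself invokes in Proposition~\ref{lem2oct}). The two delicate points are handled properly: (a) the exclusion of a unique fixed point of $\sigma$ genuinely needs the full group $\langle \iota\rangle\times D_p$, since with $D_p$ alone the quotient by the normal $p$-part is $C_2$, which is cyclic and yields no contradiction --- this is the only place the extra involution is indispensable, and indeed your converse never uses hyperellipticity beyond the existence of $\iota\notin D_p$ commuting with $D_p$, in agreement with the paper's remark after the theorem and Proposition~\ref{pr3oct}; (b) your reading of the third hypothesis as asserting that the fixed points of $D_p$ \emph{exist} and are rational is not pedantry but necessary for the statement to be true: inside $\langle\iota\rangle\times D_p$ there are two dihedral subgroups of order $2p$ containing $\sigma$, namely $\langle\sigma,\rho\rangle$ and $\langle\sigma,\iota\rho\rangle$, and on the quadratic twist $dw^2=(y^p-y)^2-4a$ of (\ref{vgvv}) (with $d$ a nonsquare) one of them is fixed-point-free while the other has its two fixed points conjugate over $\mathbb{F}_{q^2}$; under the vacuous reading the twist would satisfy all three hypotheses without being $\mathbb{F}_q$-birational to any curve (\ref{vgvv}). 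Your interpretation is the one consistent with how the theorem is applied in Proposition~\ref{pr4oct}, where the two fixed points are exhibited and shown to be $\mathbb{F}_p$-rational.
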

We remark that the hypothesis on $\cC$ to be hyperelliptic can be dropped; see Proposition \ref{pr3oct}.


\section{Background on the automorphism group of the Artin-Mumford curve}
\label{bgAM}

The \emph{Artin-Mumford curve} is the curve defined by the affine equation
\begin{equation}\label{amc}
\cM:(x^p-x)(y^p-y)=1.
\end{equation}
By \cite[Theorem 7]{valentinimadan} the automorphism group ${\Aut_{\overline{\mathbb{F}}_p}(\cM)}$ is $(C_p \times C_p)\rtimes D_{p-1}$, and is generated by
$$
\tau_{a,b}:(x,y)\mapsto(x+a,y+b) \ \ \text{ with } a,b \in \mathbb{F}_p,
$$
$$
\mu:(x,y) \mapsto (y,x),
$$
and
$$
\theta_d:(x,y) \mapsto (dx,d^{-1}y) \ \ \text{ with } d \in \mathbb{F}_p^{*}.
$$
\begin{proposition}\label{properties}
Let $\cM:(x^p-x)(y^p-y)=1$ be the Artin-Mumford curve of genus $\textsf{g}$ and $p$-rank $\g$. Then
\begin{itemize}
\item [(i)] $\textsf{g}=\g=(p-1)^2$.
\item [(ii)] $\# \cM(\mathbb{F}_p)=2p$.
\item [(iii)] The quotient curves $\cM/ \langle \tau_{1,0} \rangle$ and $\cM/ \langle \tau_{0,1} \rangle$ are rational curves.
\end{itemize}
\end{proposition}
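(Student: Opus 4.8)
The plan is to use the two coordinate projections, which exhibit $\cM$ as an Artin--Schreier cover of the projective line. Writing the defining relation as
\[
y^p-y=\frac{1}{x^p-x},
\]
the map $x\colon\cM\to\P^1$ is a Galois cover of degree $p$ with group $\langle\tau_{0,1}\rangle$, where $\tau_{0,1}\colon(x,y)\mapsto(x,y+1)$; the roles of $x$ and $y$ are interchanged by $\mu$, so each statement below has a symmetric counterpart for the projection $y$. For the genus in (i) I would first locate the ramification of this cover. The rational function $f=1/(x^p-x)$ has a simple pole at each of the $p$ points $x=a$ with $a\in\mathbb{F}_p$ (simple because $(x^p-x)'=-1$), a zero at $x=\infty$, and no other poles. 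By Artin--Schreier theory each such pole, having order $m=1$ coprime to $p$, lies under a single totally ramified place at which the ramification groups are $S_P^{(0)}=S_P^{(1)}\cong C_p$ and $S_P^{(i)}=\{1\}$ for $i\ge 2$, so that $\sum_{i\ge 0}(|S_P^{(i)}|-1)=2(p-1)$, while $x=\infty$ is unramified. Substituting into the Riemann--Hurwitz formula (\ref{rhg}) with $\bar{\textsf{g}}=0$ gives
\[
2\textsf{g}-2=p(2\cdot 0-2)+p\cdot 2(p-1),
\]
hence $\textsf{g}=(p-1)^2$.

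For the $p$-rank in (i) I would apply the Deuring--Shafarevich formula (\ref{dsg}) to $S=\langle\tau_{0,1}\rangle\cong C_p$: the quotient $\cM/S$ is rational so $\bar\g=0$, and the short orbits are precisely the $p$ totally ramified points, each of size $\l_i=1$. This yields
\[
\g-1=p(0-1)+p(p-1),
\]
that is $\g=(p-1)^2=\textsf{g}$; in particular $\cM$ is ordinary.

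Statement (iii) I would read off from the same picture: the subfield fixed by $\langle\tau_{1,0}\rangle$ contains $y$, and since $[\mathbb{F}_p(\cM):\mathbb{F}_p(y)]=p=|\langle\tau_{1,0}\rangle|$ the two subfields coincide, so $\cM/\langle\tau_{1,0}\rangle$ has function field $\mathbb{F}_p(y)$ and is rational; symmetrically $\cM/\langle\tau_{0,1}\rangle$ has function field $\mathbb{F}_p(x)$. For (ii) I would first note that there is no affine $\mathbb{F}_p$-rational point, because $a^p-a=0$ for every $a\in\mathbb{F}_p$ forces the left-hand side to vanish; hence every rational point lies over $x\in\mathbb{F}_p\cup\{\infty\}$. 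Over each of the $p$ branch points $x=a\in\mathbb{F}_p$ there is a unique place, which is $\mathbb{F}_p$-rational because it is totally ramified above an $\mathbb{F}_p$-rational point; over $x=\infty$ the cover is unramified and the residue equation $y^p-y=0$ splits completely into the $p$ roots lying in $\mathbb{F}_p$, producing $p$ further rational places. Altogether $\#\cM(\mathbb{F}_p)=2p$.

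I expect the main technical point to be the local bookkeeping at the poles of $f$ and at $x=\infty$: verifying the pole orders and the precise ramification filtration, so that the different exponent at each branch place is exactly $2(p-1)$ and nothing is ramified at infinity, and confirming the complete splitting over $x=\infty$. Once these local data are secured, the genus and $p$-rank in (i) follow from (\ref{rhg}) and (\ref{dsg}), statement (iii) from a Galois degree count, and (ii) from assembling the rational places.
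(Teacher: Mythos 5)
Your proposal is correct, but for parts (i) and (ii) it follows a genuinely different route from the paper. The paper argues on the plane model: the curve of degree $2p$ has exactly two singular points, $O_1=(1:0:0)$ and $O_2=(0:1:0)$, both ordinary $p$-fold points, so the genus formula for plane curves with ordinary singularities gives $\textsf{g}=\frac{(2p-1)(2p-2)}{2}-\frac{2p(p-1)}{2}=(p-1)^2$; ordinariness $\g=\textsf{g}$ is then quoted from Subrao's theorem on the $p$-rank of Artin--Schreier curves, and (ii) is read off from the fact that the $p$ branches at each of $O_1$, $O_2$ correspond to the rational tangent lines $y=a$, respectively $x=b$, with $a,b\in\mathbb{F}_p$. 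You instead work entirely with the Artin--Schreier fibration $x\colon\cM\to\mathbb{P}^1$: Riemann--Hurwitz with different exponent $2(p-1)$ at the $p$ totally ramified places over $x=a\in\mathbb{F}_p$ gives the genus; Deuring--Shafarevich applied to the same cover gives the $p$-rank directly, so you reprove (in this special case) the result the paper cites from Subrao; and the point count follows from decomposition theory, namely total ramification (hence rationality, since $e=p$ forces residue degree $1$) over each $x=a\in\mathbb{F}_p$ and complete splitting of $x=\infty$ because the residue equation $y^p-y=0$ splits over $\mathbb{F}_p$. Your local bookkeeping (simple poles of $1/(x^p-x)$, ramification filtration $S_P^{(0)}=S_P^{(1)}\cong C_p$, $S_P^{(i)}=1$ for $i\ge 2$, no ramification at infinity) is accurate throughout. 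Part (iii) is essentially the same degree count in both treatments: the paper's fixed field $\mathbb{F}_p(\eta,y)$ with $\eta=x^p-x$ coincides with your $\mathbb{F}_p(y)$, since $\eta=1/(y^p-y)$. What each approach buys: the paper's is shorter, at the cost of a singularity analysis on the plane model and an external citation for the $p$-rank; yours is self-contained, avoids the plane-curve genus formula altogether, and keeps all three parts inside the same ramification-theoretic framework (formulas (\ref{rhg}) and (\ref{dsg})) that the paper itself uses later, for instance in Propositions \ref{pr1oct} and \ref{lem2oct}.
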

\begin{proof} (i) The curve $\cM$ has exactly two singularities, $O_1=(1:0:0)$ and $O_2=(0:1:0)$, both ordinary $p$-fold points.
Thus,
$$
\textsf{g}=\frac{(2p-1)(2p-2)}{2}-\frac{2p(p-1)}{2}=(p-1)^2.
$$
Moreover, by \cite[Proposition 3.2]{subrao1975}, is ordinary and hence $\textsf{g}=\g=(p-1)^2$.

(ii) Obviously, $\cM$ has no affine $\mathbb{F}_p$-rational points. The tangent lines to $\cM$ at $O_1$ are
$y=a$ with $a\in \mathbb{F}_p$, while those at $O_2$ have equation $x=b$ with $b\in \mathbb{F}_p$. Therefore (ii) holds.

(iii) Set $\eta=x^p-x$. Then $\mathbb{F}_p(\eta,y) \subseteq \mathbb{F}_p(\cM)^{\langle \tau_{1,0} \rangle}$, and hence $\mathbb{F}_p(\cM)=\mathbb{F}_p(\eta,y,x)$, which yields $[\mathbb{F}_p(\cM):\mathbb{F}_p(\eta,y)]=p$. Since $|\langle \tau_{1,0} \rangle|=p$, we have $\mathbb{F}_p(\eta,y) = \mathbb{F}_p(\cM)^{\langle \tau_{1,0} \rangle}$. Hence
$$
\cM/ \langle \tau_{1,0} \rangle:y^p-y=\frac{1}{\eta}.
$$
This shows that $\cM/ \langle \tau_{1,0} \rangle$ is rational. The same holds for $\cM/ \langle \tau_{0,1} \rangle$.
\end{proof}


\section{Curves with automorphism group containing $(C_p\times C_p)\rtimes D_{p-1}$}
\label{gener}

In this section we state and prove the additional results that will be used in the proof of Theorem \ref{main}. We begin with a technical lemma.

\begin{lemma}
\label{le1oct} Let $D_{p-1}=\langle U,V \rangle$ be a subgroup of $\GL(2,p)$ isomorphic to a dihedral group of order $2(p-1)$ where $U^{p-1}=V^2=I$, and $VUV=U^{-1}$. Then, up to conjugacy in $\GL(2,p)$,
$$U=\left(
  \begin{array}{cc}
    \lambda & 0 \\
    0 & \lambda^{-1} \\
  \end{array}
\right), \qquad
V=\left(
  \begin{array}{cc}
    0 & 1 \\
    1 & 0 \\
  \end{array}
\right),
$$
where $\lambda$ is a primitive element of $\mathbb{F}_p$.
\end{lemma}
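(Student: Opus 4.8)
The plan is to determine the conjugacy class of $U$ first, using only its order together with the dihedral relation, and then to exploit the rigidity of a diagonal matrix with distinct eigenvalues to normalize $V$. Throughout I would work with $V U V^{-1}=U^{-1}$, which is equivalent to $VUV=U^{-1}$ since $V^2=I$.

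First I would fix the shape of $U$. Its order is $p-1$, which is coprime to $p$, so $U$ is semisimple and each of its eigenvalues is a $(p-1)$-th root of unity in $\overline{\mathbb{F}}_p$. As the $(p-1)$-th roots of unity are exactly the elements of $\mathbb{F}_p^{*}$, both eigenvalues lie in $\mathbb{F}_p$, and therefore $U$ is diagonalizable over $\mathbb{F}_p$. After a first conjugation I may thus write $U=\mathrm{diag}(a,b)$ with $a,b\in\mathbb{F}_p^{*}$ and $\mathrm{lcm}(\ord a,\ord b)=p-1$.

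Next I would bring in the relation. Conjugate matrices have the same eigenvalues, so $VUV^{-1}=U^{-1}$ forces $\{a,b\}=\{a^{-1},b^{-1}\}$. The alternative $a=a^{-1},\,b=b^{-1}$ would give $a,b\in\{\pm1\}$, hence $\ord U\le 2$, which is impossible for $p>3$; so $b=a^{-1}$, and then $\ord U=\ord a=p-1$ shows that $a=\lambda$ is a primitive element of $\mathbb{F}_p$. Hence $U=\mathrm{diag}(\lambda,\lambda^{-1})$ with $\lambda\neq\lambda^{-1}$, i.e. with two distinct eigenlines $\langle e_1\rangle,\langle e_2\rangle$. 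With this $U$ in hand, the equation $VUV^{-1}=U^{-1}=\mathrm{diag}(\lambda^{-1},\lambda)$ says that $V$ carries the $\lambda$-eigenspace of $U$ onto the $\lambda$-eigenspace of $U^{-1}$, that is, $V$ interchanges $\langle e_1\rangle$ and $\langle e_2\rangle$; hence $V=\begin{pmatrix} 0 & \beta \\ \gamma & 0 \end{pmatrix}$, and $V^2=I$ gives $\beta\gamma=1$. Finally, conjugating by the diagonal matrix $\mathrm{diag}(1,\beta)$, which commutes with $U$ and so leaves $U$ unchanged, turns $V$ into $\begin{pmatrix} 0 & 1 \\ 1 & 0 \end{pmatrix}$, completing the argument.

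The delicate point is the very first step: I must exclude that $U$ belongs to a non-split torus, with eigenvalues in $\mathbb{F}_{p^2}\setminus\mathbb{F}_p$, or that it is a non-semisimple (scalar-times-unipotent) element. Both are ruled out precisely because $\ord U=p-1$ is prime to $p$ and divides $|\mathbb{F}_p^{*}|$: the unipotent part would contribute a factor of $p$ to the order, while a genuine non-split eigenvalue would have order dividing $p^2-1$ but not $p-1$. The remaining care is in the final normalization, where one must conjugate only by a matrix centralizing $U$ so as not to undo the diagonalization already achieved; using a diagonal conjugator handles this automatically. The degenerate case $p=3$, in which $D_{p-1}$ is the Klein four-group and all nontrivial elements are involutions, would be treated separately.
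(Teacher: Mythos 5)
Your argument is correct for $p>3$, and it takes a genuinely different route from the paper. The paper works projectively: it splits into cases according to whether $D_{p-1}\cap Z(\GL(2,p))$ is trivial or equals $\{I,W\}$ with $W=U^{(p-1)/2}$, invokes Dickson's classification of subgroups of $\PGL(2,p)$ to pin down the image of $D_{p-1}$ in $\PGL(2,p)$, rules out the first case because $\alpha^2\lambda=1$ is impossible for $\lambda$ primitive, and in the second case lifts the projective generators back to $\GL(2,p)$ to obtain the stated matrices. You stay inside $\GL(2,p)$ and use only linear algebra: since $\ord U=p-1$ is prime to $p$ and divides $|\mathbb{F}_p^*|$, the matrix $U$ is diagonalizable over $\mathbb{F}_p$; the relation $VUV^{-1}=U^{-1}$ forces the eigenvalue multiset $\{a,b\}$ to coincide with $\{a^{-1},b^{-1}\}$, and since $\ord U>2$ this gives $b=a^{-1}$ with $a=\lambda$ primitive; then $V$ must interchange the two distinct eigenlines of $U$, hence is anti-diagonal, and $V^2=I$ together with a final conjugation by $\diag(1,\beta)$ (which centralizes the diagonal $U$, so does not undo the normalization already achieved) turns $V$ into the swap matrix. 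Your route is more elementary and self-contained, and it makes visible exactly where each hypothesis is used; the paper's argument is shorter on the page but uses Dickson's classification as a black box.

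The one caveat is the case $p=3$, which you defer without treating. Be aware that no argument could close it in the form stated: for $p=3$ the lemma as written is false. Take $U=\diag(-1,1)$ and $V=\diag(1,-1)$ in $\GL(2,3)$; then $\langle U,V\rangle$ is a Klein four-group, i.e.\ dihedral of order $2(p-1)=4$, and all stated relations hold, yet $U$ is not conjugate to $\diag(\lambda,\lambda^{-1})=-I$ (the determinants differ). Only the weaker, group-level statement survives at $p=3$: the subgroup $\langle U,V\rangle$ is conjugate to the group generated by the two displayed matrices, but one must re-choose the dihedral generators. The paper's own proof also breaks down silently at $p=3$: its opening claim that $Z(D_{p-1})=\{I,W\}$ fails for the Klein four-group (which is its own center), and in the example above $D_{p-1}\cap Z(\GL(2,p))=\{\pm I\}=\{I,UV\}$, which falls into neither of the paper's two cases. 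So your explicit flag of $p=3$ is not a defect relative to the paper; for $p>3$ your proof is complete.
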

\begin{proof} Let $\PGL(2,p)=\GL(2,p)/Z(\GL(2,p))$ and regard it as the automorphism group of the rational function field $\mathbb{F}_p(x)$. The center $Z(D_{p-1})$ consists of $W=U^{(p-1)/2}$ and $I$. Therefore, either $D_{p-1}\cap Z(\GL(2,p))=I$ or $D_{p-1}\cap Z(\GL(2,p))=\{I,W\}$.

In the former case, $D_{p-1}$ is isomorphic to a subgroup $M$ of  $\PGL(2,p)$.
{}From the classification of subgroups of $\PGL(2,q)$ up to conjugacy, $M=\langle u,v \rangle$ with
$u: x\mapsto\lambda x$ and $v:x\mapsto x^{-1}$ where $\lambda$ is a primitive element of $\mathbb{F}_p$; see \cite[Theorem A.8]{HKT} for instance. Therefore, there exist $\alpha, \beta \in \mathbb{F}_p^*$ such that, up to conjugacy in $\GL(2,p)$,
$$U=\alpha \left(
  \begin{array}{cc}
    \lambda & 0 \\
    0 & 1 \\
  \end{array}
\right), \qquad
V=\beta \left(
  \begin{array}{cc}
    0 & 1 \\
    1 & 0 \\
  \end{array}
\right).
$$
Here $\beta=\pm 1$ as $V^2=I$. Furthermore,
$VUVU=I$ holds if and only if
$$\alpha^2\left(
  \begin{array}{cc}
    1 & 0 \\
    0 & \lambda \\
  \end{array}
\right)
\left(
  \begin{array}{cc}
    \lambda & 0 \\
    0 & 1 \\
  \end{array}
\right) = I
$$
whence $\alpha^2\lambda=1$ which is impossible as $\lambda$ is a primitive element of $\mathbb{F}_p$.

Therefore, $D_{p-1}\cap Z(\GL(2,p))=\{I,W\}$ and hence $D_{p-1}/\langle W \rangle$ is isomorphic to a dihedral subgroup of $\PGL(2,p)$ of order $p-1$. This time, from the classification of subgroups of $\PGL(2,q)$ up to conjugacy, $M=\langle u,v \rangle$ with
$u: x\mapsto \lambda^2 x$ and $v:x\mapsto x^{-1}$ where $\lambda$ is a primitive element of $\mathbb{F}_p$. Arguing as before, this implies the existence of $\alpha \in \mathbb{F}_p^*$ such that, up to conjugacy in $\GL(p)$,
$$U=\alpha \left(
  \begin{array}{cc}
    \lambda^2 & 0 \\
    0 & 1 \\
  \end{array}
\right), \qquad
V=\pm \left(
  \begin{array}{cc}
    0 & 1 \\
    1 & 0 \\
  \end{array}
\right).
$$
 Furthermore,
$VUVU=I$ holds if and only if
$$\alpha^2\left(
  \begin{array}{cc}
    1 & 0 \\
    0 & \lambda^2 \\
  \end{array}
\right)
\left(
  \begin{array}{cc}
    \lambda^2 & 0 \\
    0 & 1 \\
  \end{array}
\right) = I,
$$
that is, $\alpha^2\lambda^2=1$. Therefore,
$$U=\left(
  \begin{array}{cc}
    \alpha^{-1} & 0 \\
    0 & \alpha \\
  \end{array}
\right), \qquad
V=\pm \left(
  \begin{array}{cc}
    0 & 1 \\
    1 & 0 \\
  \end{array}
\right).
$$
In particular, $\alpha$ is a primitive element of $\mathbb{F}_p$. If $$V=- \left(
  \begin{array}{cc}
    0 & 1 \\
    1 & 0 \\
  \end{array}
\right),
$$
then replace $V$ by $U^{(p-1)/2}V$. Then $V$ becomes  $$V=\left(
  \begin{array}{cc}
    0 & 1 \\
    1 & 0 \\
  \end{array}
\right),
$$
and still $D_{p-1}=\langle U,V \rangle$. This completes the proof.
\end{proof}

\begin{corollary}\label{rep1}
\label{cor1oct} Let $H=(C_p\times C_p)\rtimes D_{p-1}$, and regard $C_p\times C_p$ as the translation group of the affine plane $\rm{AG}(2,p)$ over $\mathbb{F}_p$. Then there exists a coordinate system $(x,y)$ such that $C_p\times C_p$ is the group of all translations of $\rm{AG}(2,p)$ and $D_{p-1}=\langle U,V \rangle$ is the subgroup of $\GL(2,p)$ as in Lemma \ref{le1oct}.
\end{corollary}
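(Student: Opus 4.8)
The plan is to work inside the affine group $\AGL(2,p)=(C_p\times C_p)\rtimes \GL(2,p)$, in which $C_p\times C_p$ is the normal group $T$ of all translations of $\rm{AG}(2,p)$ and $\GL(2,p)$ is the stabilizer of the origin acting linearly. Under the hypothesis that $C_p\times C_p$ is identified with $T$, the semidirect decomposition $H=T\rtimes D_{p-1}$ exhibits $D_{p-1}$ as a complement of $T$ in $H$, and the whole task reduces to showing that, after a suitable change of the affine coordinate system, this complement may be taken inside $\GL(2,p)$ and there put into the diagonal/antidiagonal normal form of Lemma \ref{le1oct}.

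The first step is to record the coprimality $\gcd(|D_{p-1}|,p)=\gcd(2(p-1),p)=1$, which holds because $p>2$. This has two consequences. On the one hand $D_{p-1}\cap T$ is a common subgroup of groups of orders $2(p-1)$ and $p^2$, hence trivial, so the projection $\pi\colon \AGL(2,p)\to \GL(2,p)$ with kernel $T$ restricts to an isomorphism of $D_{p-1}$ onto $\bar D=\pi(D_{p-1})\le \GL(2,p)$, a dihedral group of order $2(p-1)$ fixing the origin. Moreover, since $T\le H$, the translation part of every $d\in D_{p-1}$ lies in $H$, whence $\pi(d)\in H$ and $\bar D\le H$; thus $H=T\rtimes \bar D$, and $\bar D$ is a complement of $T$ contained in $\GL(2,p)$. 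On the other hand $T$ is an abelian normal subgroup of $H$ with $\gcd(|T|,|H/T|)=1$, so $H^1(H/T,T)=0$ and, by the conjugacy part of the Schur--Zassenhaus theorem, all complements of $T$ in $H$ form a single $T$-conjugacy class.

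Applying this to the two complements $D_{p-1}$ and $\bar D$ yields a translation $t\in T$ with $t\,D_{p-1}\,t^{-1}=\bar D$. Conjugating $H$ by $t$ amounts to moving the common fixed point of $\bar D$ to the origin; it preserves $T$ because $T$ is normal in $\AGL(2,p)$, so after this adjustment $D_{p-1}\le \GL(2,p)$ while $C_p\times C_p$ is still the full translation group. Now $D_{p-1}=\langle U,V\rangle$ for dihedral generators with $U^{p-1}=V^2=I$ and $VUV=U^{-1}$, and Lemma \ref{le1oct} supplies $P\in \GL(2,p)$ conjugating $U,V$ into the asserted normal form; conjugation by $P$ is a linear change of coordinates, again fixing $T$ setwise, and the two successive changes produce the desired coordinate system. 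The only delicate point is this linearization, i.e. guaranteeing that $D_{p-1}$ has a common fixed point on $\rm{AG}(2,p)$ so that it can be pushed into the point-stabilizer $\GL(2,p)$; this is exactly where coprimality is essential, and I expect the Schur--Zassenhaus conjugacy of complements (equivalently the vanishing of $H^1(D_{p-1},T)$) to be the crux, the remainder being a routine invocation of Lemma \ref{le1oct}.
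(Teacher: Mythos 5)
Your group theory is sound, but note first that the paper offers no written proof of this corollary at all: on the paper's intended reading it is an immediate restatement of Lemma \ref{le1oct}. The only action of $D_{p-1}$ that the hypothesis actually supplies is its conjugation action on $T=C_p\times C_p$ inside the semidirect product, and for the group (\ref{gro}) this action is \emph{faithful} (this is part of what (\ref{gro}) means; for a semidirect product whose action has a kernel the conclusion is false). Choosing a coordinate system on ${\rm AG}(2,p)$ is exactly choosing an origin together with an $\mathbb{F}_p$-basis of $T$; under any such choice one has $\Aut(T)=\GL(2,p)$, the conjugation action realizes $D_{p-1}$ as a dihedral subgroup of order $2(p-1)$ of origin-fixing linear maps, and Lemma \ref{le1oct} normalizes it. No Schur--Zassenhaus is needed, because the semidirect-product structure already hands you $D_{p-1}$ inside the point stabilizer.

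Your proof instead starts from the premise that $H$ is given as a subgroup of $\AGL(2,p)$ containing the full translation group, with $D_{p-1}$ a complement that may fix no point --- a premise the statement does not provide and which you never construct. Granting it, your argument is correct and proves something genuinely stronger: the decomposition $d=t_d\,\pi(d)$ showing that $\bar D=\pi(D_{p-1})$ is a complement of $T$ inside $H\cap\GL(2,p)$, and the vanishing of $H^1(H/T,T)$ giving a translation $t\in T$ with $t\,D_{p-1}\,t^{-1}=\bar D$, are both fine, and together they show that \emph{any} affine realization of $H$ can be linearized by a change of origin. The trade-off is this: the paper's reading makes the corollary a one-line application of Lemma \ref{le1oct}, while your route buys a normalization theorem for arbitrary affine embeddings; but to make your argument a proof of the corollary as stated you would need one more sentence --- the faithful conjugation action yields the embedding $H\hookrightarrow T\rtimes\Aut(T)=\AGL(2,p)$ --- and that very sentence already places $D_{p-1}$ inside $\GL(2,p)$, rendering your Schur--Zassenhaus step redundant for the statement at hand.
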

{}From now on we use the notation introduced in in Lemma \ref{le1oct} and Corollary \ref{rep2}. With respect to the coordinate system $(x,y)$, let $\tau_{a,b}=(x,y)\to(x+a,y+b)$ for $a,b\in \mathbb{F}_p$, and
 $$
T_1=\{\tau_{a,a}|a\in \mathbb{F}_p\},\quad T_2=\{\tau_{a,-a}|a \in \mathbb{F}_p\}.
$$
A straightforward computation proves the following assertions.
\begin{lemma}\label{rep2}

\begin{itemize}
\item[(i)] $\tau_{1,1}T_2=T_2\tau_{1,1}$,\,\,$\tau_{1,1}V=V\tau_{1,1}$.
\item[(ii)] $W\tau_{a,b} W = \tau_{-a,-b}$.
\item[(iii)] $VT_2=T_2V$,\, $WV=VW$.
\item[(iv)] $\langle \tau_{1,1},V,W \rangle = D_p \times \langle V \rangle $
where $D_p$ is a dihedral group of order $2p$.
\end{itemize}
\end{lemma}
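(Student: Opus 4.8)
The plan is to reduce every assertion to the single conjugation rule that governs the semidirect product. Writing a point of $\mathrm{AG}(2,p)$ as a column vector and letting a linear element $g\in D_{p-1}$ act through its matrix $M$, one has $g\,\tau_{v}\,g^{-1}=\tau_{Mv}$ for the translation $\tau_v$ by the vector $v$; indeed $g(g^{-1}(x)+v)=x+Mv$ because $g$ is linear. Before invoking this I would record the shape of $W$: since $\lambda$ is a primitive element of $\mathbb{F}_p$, the diagonal matrix $U^{(p-1)/2}$ has both entries equal to $\lambda^{\pm(p-1)/2}=-1$, so $W=-I$ is the central involution of $\GL(2,p)$. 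I would also note at the outset that the full translation group $C_p\times C_p$ is abelian, so any two translations commute.

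With these two facts the computational assertions are immediate. For (i), commutativity of translations gives $\tau_{1,1}T_2=T_2\tau_{1,1}$, while $V\binom{1}{1}=\binom{1}{1}$ shows $V\tau_{1,1}V^{-1}=\tau_{V\binom{1}{1}}=\tau_{1,1}$, i.e. $V$ and $\tau_{1,1}$ commute. For (ii), using $W=W^{-1}$ and $W=-I$ one gets $W\tau_{a,b}W=\tau_{-(a,b)}=\tau_{-a,-b}$. For (iii), $V\binom{a}{-a}=\binom{-a}{a}\in T_2$ shows that $V$ normalizes $T_2$, whence $VT_2=T_2V$, and $WV=VW$ is automatic since $W=-I$ is central in $\GL(2,p)$.

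For (iv) I would argue that the three generators assemble into a direct product. Specializing (ii) to $a=b=1$ gives $W\tau_{1,1}W=\tau_{1,1}^{-1}$, so $W$ inverts the element $\tau_{1,1}$ of order $p$, and hence $\langle\tau_{1,1},W\rangle$ is a dihedral group $D_p$ of order $2p$. By (i) and (iii) the involution $V$ commutes with both $\tau_{1,1}$ and $W$, so $\langle V\rangle$ centralizes $D_p$. It then remains to check that the product is direct, i.e. $\langle\tau_{1,1},W\rangle\cap\langle V\rangle=\{I\}$: the elements of $\langle\tau_{1,1},W\rangle$ have linear part either $I$ (the translations $\tau_{a,a}$) or $W=-I$ (the maps $\tau_{a,a}W$), whereas $V$ has linear part $\left(\begin{smallmatrix}0&1\\1&0\end{smallmatrix}\right)\neq\pm I$; therefore $V\notin\langle\tau_{1,1},W\rangle$. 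This yields $\langle\tau_{1,1},V,W\rangle=D_p\times\langle V\rangle$, as claimed.

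The verification is genuinely routine once the conjugation rule and the identity $W=-I$ are in hand; the only point demanding a moment's care is the directness of the product in (iv), where one must rule out $V\in\langle\tau_{1,1},W\rangle$ by comparing linear parts rather than by counting orders, since the orders alone are compatible with a nonsplit configuration.
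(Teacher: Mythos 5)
Your proof is correct and is essentially the paper's intended argument: the paper gives no details at all, dismissing the lemma with ``a straightforward computation proves the following assertions,'' and your verification (the conjugation rule $g\tau_v g^{-1}=\tau_{Mv}$, the identification $W=U^{(p-1)/2}=-I$, and the linear-part argument for directness in (iv)) is precisely that computation, carried out correctly.
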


We also need a number of results of independent interest about curves whose automorphism groups contain a subgroup isomorphic to $C_p\times C_p$.
\begin{proposition}\label{pr1oct}
Assume that $\Aut_{\mathbb{K}}(\cX)$ contains a subgroup $H=S\rtimes D_{p-1}$ where $S=C_p \times C_p$.
 Let $M_1,\ldots,M_{p+1}$ be the cyclic subgroups of order $p$ of $S$. If $\cX$ has genus $\textsf{g}=(p-1)^2$ then the following assertions hold.
\begin{itemize}
\item[(i)] $\cX$ is an ordinary curve.
\item[(ii)] Up to labeling the indices, the extension $\mathbb{K}(\cX)|\mathbb{K}(\cX)^{M_i}$ is unramified for each $i \in \{3,\ldots,p+1\}$.
\item[(iii)] $S$ has only two short orbits $\Omega_1$ and $\Omega_2$, each of size $p$. The points of $\Omega_i$ share the same stabilizer $M_i$, for $i \in \{1,2\}$,  and $M_1 \neq M_2$. Moreover, $\cX/M_1$ and $\cX/M_2$ are rational.
\end{itemize}
\end{proposition}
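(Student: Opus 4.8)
The plan is to analyze the action of the $p$-group $S=C_p\times C_p$ through the Deuring--Shafarevich formula (\ref{dsg}) and the Riemann--Hurwitz formula (\ref{rhg}), using the $p$-rank bound $0\le\g\le\textsf{g}=(p-1)^2$ to determine the ramification of $S$ completely, and then to discard the one degenerate configuration by exploiting that $D_{p-1}$ normalizes $S$. First I would set up the bookkeeping. Since $S$ is abelian, the stabilizer of any point lies in $\{1,M_1,\dots,M_{p+1},S\}$, so each short orbit of $S$ has size $p$ (stabilizer some $M_i$) or size $1$ (stabilizer $S$). Write $n_i$ for the number of short orbits of size $p$ with stabilizer $M_i$, set $N=\sum_i n_i$, and let $f$ be the number of points fixed by all of $S$. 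A direct count gives $\#\mathrm{Fix}(M_i)=p\,n_i+f$, since each size-$p$ orbit with stabilizer $M_i$ contributes all of its $p$ points and every $S$-fixed point is fixed by $M_i$.

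Next I would show that $\bar{\cX}=\cX/S$ has $p$-rank $\bar\g=0$. Formula (\ref{dsg}) for $S$ reads $\g-1=p^2(\bar\g-1)+(p^2-p)N+(p^2-1)f$, and $\g-1\le p^2-2p$ forces $\bar\g\le 1$. If $\bar\g=1$ then $N=f=0$, so $S$ acts without ramification and (\ref{rhg}) becomes $2(p-1)^2-2=p^2(2\bar{\textsf{g}}-2)$, giving $2\bar{\textsf{g}}-2=2-4/p$, which fails to be an integer for $p>2$, a contradiction. Hence $\bar\g=0$, and (\ref{dsg}) yields $\g=1-p^2+(p^2-p)N+(p^2-1)f$. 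Imposing $0\le\g\le(p-1)^2$ and dividing by $p-1$ gives $p+1\le pN+(p+1)f\le 2p$, whose only solutions are (a)~$f=0,\ N=2$, with $\g=(p-1)^2$, and (b)~$f=1,\ N=0$, with $\g=0$.

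The crux, and the step I expect to be the main obstacle, is to rule out (b). In that case $S$ has a unique fixed point $P_0$ and no other short orbit; since $D_{p-1}$ normalizes $S$ it permutes $\mathrm{Fix}(S)=\{P_0\}$ and therefore fixes $P_0$, so the whole group $H$ fixes $P_0$. Applying the structure of a point stabilizer (its first ramification group is a normal Sylow $p$-subgroup with cyclic prime-to-$p$ quotient) to $H=H_{P_0}$, the wild inertia $H^{(1)}$ equals the normal Sylow $p$-subgroup $S$, so $H/S\cong D_{p-1}$ would have to be cyclic; but $D_{p-1}$ is dihedral of order $2(p-1)\ge 4$, hence non-cyclic, a contradiction. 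Thus (a) holds: $\g=\textsf{g}=(p-1)^2$, proving (i), with $f=0$ and $N=2$.

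It remains to read off (ii) and (iii). Applying (\ref{dsg}) to a single $M_j$, where now $\#\mathrm{Fix}(M_j)=p\,n_j$, I would observe that $n_j=2$ would force the $p$-rank of $\cX/M_j$ to equal $1-p<0$; hence every $n_j\le 1$, and $N=2$ gives exactly two short orbits $\Omega_1,\Omega_2$ of size $p$ with \emph{distinct} stabilizers. Relabelling these stabilizers as $M_1,M_2$, every $M_i$ with $i\ge 3$ satisfies $\#\mathrm{Fix}(M_i)=0$, so $\mathbb{K}(\cX)\mid\mathbb{K}(\cX)^{M_i}$ is unramified, which is (ii). Finally, for rationality I would apply (\ref{rhg}) to $\cX\to\cX/M_1$: with $p$ fixed points, each of different exponent at least $2(p-1)$, the formula $2(p-1)^2-2=p(2g'-2)+\sum_{P\in\Omega_1}d(P)$ forces $p(2g'-2)\le -2p$, so $g'=0$; the same argument shows $\cX/M_2$ is rational, completing (iii).
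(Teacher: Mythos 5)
Your proof is correct, and while it rests on the same three pillars as the paper's proof --- Deuring--Shafarevich bookkeeping for $S$, the contradiction between $H/S\cong D_{p-1}$ and the stabilizer structure theorem (normal $p$-subgroup with cyclic prime-to-$p$ quotient), and Riemann--Hurwitz for the rationality of $\cX/M_i$ --- your organization of the case analysis is genuinely different and more self-contained. The paper first pins down the $p$-rank $\g$ of $\cX$ itself: it excludes $\g=0$ by citing \cite[Lemma 11.129]{HKT} (on a curve of $p$-rank zero every automorphism of order $p$ has exactly one fixed point) and excludes $\g=1$ by Nakajima's divisibility theorem \cite[Theorem 1(ii)]{nakajima1987}; only then, with $\g\ge 2$ in hand, does it run the Deuring--Shafarevich count to force $\bar\g=0$ and $k=2$, finally killing size-one orbits with the dihedral-quotient argument. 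You instead parametrize the ramification completely by $(\bar\g,N,f)$ from the start, rule out $\bar\g=1$ by an integrality check on Riemann--Hurwitz for an unramified degree-$p^2$ cover (this replaces the appeal to Nakajima), and reduce to the two configurations $(N,f)=(2,0)$ and $(N,f)=(0,1)$; the latter, which subsumes the paper's $\g=0$ case without needing \cite[Lemma 11.129]{HKT}, is then destroyed by exactly the trick the paper uses, namely that the unique fixed point of the normal subgroup $S$ would be fixed by all of $H$, forcing $D_{p-1}$ to be cyclic. What your route buys is economy of ingredients: beyond group theory you use only (\ref{dsg}), (\ref{rhg}) and the stabilizer structure theorem, with no external $p$-rank theorems. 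Your treatment of (ii) and (iii) --- Deuring--Shafarevich applied to each $M_j$ to get $n_j\le 1$, hence two distinct stabilizers $M_1\ne M_2$, and Riemann--Hurwitz with wild different exponent at least $2(p-1)$ at each of the $p$ fixed points to get genus zero --- coincides with the paper's argument in substance.
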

\begin{proof}
Let $\gamma$ be the $p$-rank of $\cX$. If $\gamma=0$, \cite[Lemma 11.129]{HKT} implies that every element of $S$ fixes only one point of $\cX$. Take $\sigma \in S$ and let $P \in \cX$ the fixed point of $\sigma$. Choose $\tau \in S\backslash \langle \sigma \rangle$. From $\sigma\tau=\tau\sigma$, it follows that $\tau(P)=P$. Hence $P$ is the only fixed point of $\tau$. Therefore, $S$ has only one fixed point. Since $S$ is a normal subgroup of $H$, $P$ is fixed by $H$. Thus $H/S$ is a cyclic group (see e.g. \cite[Theorem 11.49]{HKT}), which is a contradiction by $H/S \cong D_{p-1}$. If $\gamma=1$, then $p^2|(\textsf{g}-1)$ by \cite[Theorem 1(ii)]{nakajima1987}, which is not possible as   $\textsf{g}=(p-1)^2$. Hence $\gamma \geq 2$.

 Set $\bar{\cX}=\cX/S$ and let $\bar{\gamma}$ be the $p$-rank of $\bar{\cX}$. The Deuring-Shafarevich formula (\ref{dsg}) applied to $S$ yields
\begin{equation}\label{equ1}
\gamma-1=p^2(\bar{\gamma}-1)+\sum_{i=1}^{k}(p^2-\ell_i),
\end{equation}
 where $k$ is the number of short orbits of $S$ and $\ell_1,\ldots,\ell_k$ are their sizes. If $k=0$, then $\gamma-1=p^2(\bar{\gamma}-1)$, and thus $\bar{\gamma}>1$ by $\gamma \geq 2$. But this yields $p^2 \leq \gamma-1 \leq \textsf{g}-1=p(p-2)$, a contradiction. Therefore $k \geq 1$. Now, since $|S|=p^2$, we have $\ell_i \in \{1,p\}$ for each $i$. Thus, bearing in mind that $\textsf{g}\geq \gamma$, (\ref{equ1}) gives
\begin{equation}\label{equ2}
 p(p-2)\geq (\bar{\gamma}+k-1)p^2-kp,
\end{equation}
so $\bar{\gamma}=0$ and $k \leq 2$. Suppose that $k=1$. Then (\ref{equ1}) implies $\gamma-1=-\ell$, with $\ell \in \{1,p\}$, which is impossible. Therefore $k=2$.

Let $\Omega_1$ and $\Omega_2$ be the short orbits of $S$ such that $\ell_i=|\Omega_i|$ for $i \in\{1,2\}$. Then (\ref{equ1}) reads
\begin{equation}\label{equ3}
\gamma-1=p^2-(\ell_1+\ell_2).
\end{equation}
In particular, $\ell_i=p$ for at least one $i$. If $\ell_i=1$ for some $i$, then $S$ fixes only one point $P \in \cX$ and, arguing as in the beginning of the proof, this raises a contradiction. Hence $\ell_1=\ell_2=p$, which proves (i). Moreover, all points of $\Omega_i$ have the same stabilizer $M_i$ of order $p$, for $1\le i\le 2$. Each of the remaining subgroups of order $p$ of $S$ has no fixed point, and thus (ii) is proved.

Finally, denote by $\textsf{g}_i$ the genus of the curve $\cX/M_i$, for $i=1,2$. By Riemann-Hurwitz Formula (\ref{rhg}) applied to the cover $\cX \rightarrow \cX /M_i$,
$$
2\textsf{g}-2 \geq p(2\textsf{g}_i-2)+2p(p-1).
$$
Hence $\textsf{g}_i=0$ for $i=1,2$. It remains to show that $M_1 \neq M_2$. In fact, if $M_1=M_2$, then $M_1$ would have $2p$ fixed points, which is impossible by Deuring-Shafarevich formula (\ref{dsg}) applied to $\cX \rightarrow \cX /M_1$.
\end{proof}

\begin{proposition}\label{pr3oct} For a curve $\cY$ defined over $\mathbb{F}_q$, where $q$ is a power of $p$,  assume that $\Aut_{\mathbb{F}_q}(\cY)$ has a subgroup $C_p\times \langle \mu \rangle$ with an involution $\mu$.
If $\cY$ has genus $\textsf{g}=p-1$ then $\cY$ is ordinary and hyperelliptic.
\end{proposition}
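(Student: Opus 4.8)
The plan is to study the Artin--Schreier cover $\cY\to\bar\cY:=\cY/\langle\sigma\rangle$, where $\sigma$ generates the $C_p$ factor, and then to use the commuting involution $\mu$ to pin down its ramification. Write $r$ for the number of branch points; since $\langle\sigma\rangle$ has prime order $p$, each is totally ramified with a single jump $d_j\ge 1$ with $\gcd(d_j,p)=1$, and we may take the defining function $f$ to have poles of exactly these orders. Each ramified point contributes different exponent $(d_j+1)(p-1)$, so by the Riemann--Hurwitz formula (\ref{rhg}), with $\textsf{g}=p-1$ and $D=\sum_{j=1}^{r}(d_j+1)$,
\begin{equation*}
4(p-1)=2p\bar{\textsf{g}}+(p-1)D,
\end{equation*}
whence $(p-1)(4-D)=2p\bar{\textsf{g}}$. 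First I would note that $r=0$ is impossible, as it forces $\bar{\textsf{g}}=2(p-1)/p\notin\Z$ for odd $p$; hence $D\ge 2$. Then $p\mid(4-D)$ together with $0\le 4-D\le 2$ forces $D=4$ and $\bar{\textsf{g}}=0$, so $\bar\cY$ is rational and the only possibilities are $r=2$ with $d_1=d_2=1$, or (only for $p>3$) $r=1$ with $d_1=3$.

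Next I would eliminate the case $r=1$, which is where the involution is essential and which I expect to be the main obstacle. Fix a coordinate $t$ on $\bar\cY$ with the unique branch point at $t=\infty$, so the cover is $\cY:y^p-y=f(t)$ with $f\in\overline{\mathbb{F}}_p[t]$ of degree $3$ and $\sigma:(t,y)\mapsto(t,y+1)$. Since $\mu$ commutes with $\sigma$ it preserves $\overline{\mathbb{F}}_p(\cY)^{\langle\sigma\rangle}=\overline{\mathbb{F}}_p(t)$, hence descends to $\bar\mu\in\PGL(2,\overline{\mathbb{F}}_p)$ fixing the unique branch point $t=\infty$; thus $\bar\mu:t\mapsto\alpha t+\beta$. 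Moreover $\sigma(\mu(y))=\mu(\sigma(y))=\mu(y)+1$, so $\mu(y)-y$ is $\sigma$-invariant and $\mu(y)=y+h(t)$ with $h\in\overline{\mathbb{F}}_p(t)$. Substituting into the defining relation gives $h^p-h=f(\alpha t+\beta)-f(t)$. Comparing poles shows $h$ has no finite pole, so $h$ is a polynomial, and comparing degrees (the left side has degree a multiple of $p>3$ unless $h$ is constant, while the right side has degree $\le 3$) forces $h$ constant and $f(\alpha t+\beta)-f(t)$ constant. Matching top coefficients then yields $\alpha^3=1$, while $\bar\mu^2=\mathrm{id}$ gives $\alpha^2=1$ and $\beta(\alpha+1)=0$; for odd $p$ these force $\alpha=1,\ \beta=0$, i.e. $\bar\mu=\mathrm{id}$, so $\mu\in\langle\sigma\rangle$, contradicting that $\mu$ has order $2$. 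Hence $r=2$.

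Finally, with $r=2$ I would read off both conclusions. The Deuring--Shafarevich formula (\ref{dsg}) applied to the $p$-group $\langle\sigma\rangle$ over the rational curve $\bar\cY$ (so $\bar\gamma=0$), with $r=2$ fixed points, gives for the $p$-rank $\gamma=(r-1)(p-1)=p-1=\textsf{g}$; thus $\cY$ is ordinary. For hyperellipticity, the two simple poles of $f$ may be placed at $t=0,\infty$ and the remaining additive constant absorbed over $\overline{\mathbb{F}}_p$, so $\cY:y^p-y=a(t+t^{-1})$ with $a\neq 0$. Then $j:(t,y)\mapsto(t^{-1},y)$ is an involution of $\cY$ whose fixed field contains $\overline{\mathbb{F}}_p(y)$, and $[\overline{\mathbb{F}}_p(\cY):\overline{\mathbb{F}}_p(y)]=2$ since $t$ satisfies $t^2-a^{-1}(y^p-y)\,t+1=0$. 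Hence $\overline{\mathbb{F}}_p(\cY)^{\langle j\rangle}=\overline{\mathbb{F}}_p(y)$ is rational, so $j$ is a hyperelliptic involution and, as $\textsf{g}=p-1\ge 2$, the curve $\cY$ is hyperelliptic. Since ordinariness and hyperellipticity are geometric, base-changing from $\mathbb{F}_q$ to $\overline{\mathbb{F}}_p$ causes no loss. Note that this argument is self-contained and in particular does not invoke Theorem \ref{tvgvv}, so the remark following that theorem is not circular.
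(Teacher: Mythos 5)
Your proof is correct, and it takes a genuinely different route from the paper's. The paper never writes down an equation for $\cY$: it applies Riemann--Hurwitz to $\cY\to\cY/C_p$ to get $\bar{\textsf{g}}=0$ and $\lambda\in\{1,2\}$ fixed points of $C_p$, and then argues entirely through fixed-point counts of $\mu$. The case $\lambda=1$ (your $r=1$) is excluded by showing $\mu$ would have exactly $p+1$ fixed points, whence $\cY/\langle\mu\rangle$ has genus $(p-1)/4$, and Riemann--Hurwitz applied to $\cY/\langle\mu\rangle\to\cY/(C_p\times\langle\mu\rangle)$ (rational by L\"uroth) gives the contradiction $5=2\alpha(2+j)$; in the case $\lambda=2$ the paper further splits according to whether $\mu$ fixes or swaps the two fixed points of $C_p$, kills the first subcase by another quotient argument, and in the second shows $\mu$ has exactly $2p$ fixed points, so that $\mu$ itself is the hyperelliptic involution. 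You instead exploit the explicit Artin--Schreier model: Riemann--Hurwitz with jumps forces $D=4$, i.e.\ either two simple poles or one triple pole; you kill the triple-pole case by showing a commuting involution would descend to the identity in $\PGL(2,\overline{\mathbb{F}}_p)$ (the $h^p-h$ computation), and with two simple poles you build a hyperelliptic involution $t\mapsto t^{-1}$ fixing $y$ directly from the normal form, never using $\mu$ again --- so you avoid the paper's subcase analysis entirely, and you get as a by-product the explicit model $y^p-y=a(t+t^{-1})$, essentially re-deriving the curve of Theorem \ref{tvgvv}; the price is reliance on Artin--Schreier normalization rather than pure group-action arithmetic, and your hyperelliptic involution is not identified with $\mu$ (which the proposition does not require). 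Two minor points you should make explicit, neither affecting correctness: passing from $y^p-y=at+bt^{-1}$ to the symmetric form uses a rescaling $t\mapsto\lambda t$ with $\lambda^2=b/a$, available over $\overline{\mathbb{F}}_p$; and the degree comparison forcing $h$ constant uses $p\geq 5$, which is legitimate since the $r=1$, $d_1=3$ case already presupposes $\gcd(3,p)=1$.
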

\begin{proof}
 Let $\bar{\textsf{g}}$ be the genus of the quotient curve $\bar{\cY}:=\cY/C_p.$ By the Riemann- Hurwitz formula (\ref{rhg})  applied to the cover $\cY \rightarrow \bar{\cY}$,
\begin{equation}\label{rh3}
2\textsf{g}-2=p(2\bar{\textsf{g}}-2)+\lambda(2+k)(p-1),
\end{equation}
where $\lambda$ the number of fixed points of $C_p$ and $k=\sum_{i\geq 2}(|S_P^{(i)}|-1)$ for $S=C_p$ and a fixed point $P$ of $C_p$. Since $k\geq 0$,
\begin{equation}\label{rh3.1}
4(p-1)=2p\bar{\textsf{g}}+\lambda(2+k)(p-1),
\end{equation}
which gives $1 \leq \lambda \leq 2$ and $\bar{\textsf{g}}=0$. In particular, $\bar{\cY}$ has $p$-rank $\bar{\gamma}=0$.

Now, let $\textsf{g}'$ be the genus of the quotient curve $\cY^{\prime}=\cY/\langle \mu \rangle$. Since $\mu$ is an involution, the Riemann-Hurwitz formula (\ref{rhg})  applied to the cover $\cY \rightarrow \cY^{\prime}$ yields
\begin{equation}\label{rh3.2}
2\textsf{g}-2=2(2\textsf{g}^{\prime}-2)+\delta,
\end{equation}
where $\delta$ is the number of fixed points of $\mu$. Hence
\begin{equation}\label{rh3.3}
\textsf{g}^{\prime}=\textstyle\frac{1}{4}(2p-\delta).
\end{equation}
This shows that $\delta$ is an even number and that $0<\delta \leq 2p$.

Assume first $\lambda=1$. Let $P \in \cY$ be the unique fixed point of $C_p$ in $\cY$. Since $\mu$ commutes with all element of $C_p$, we have $\mu(P)=P$. As $\delta$ is even, this yields the existence of a fixed point $Q\in \cY$ of $\mu$ other than $P$. For a nontrivial element $\sigma\in C_p$, let
$$
C_p(Q)=\{Q,\sigma(Q),\ldots,\sigma^{p-1}(Q)\}
$$
the orbit of $Q$ under the action of $C_p$. Since $\sigma^{i}\mu=\mu\sigma^{i}$ for all $i\geq 0$, we have $\mu(\sigma^{i}(Q))=\sigma^{i}(Q)$ for all $i \geq 0$. Thus $\mu$ fixes $C_p(Q)$ pointwise. Hence $\delta \geq p+1$. If $\mu$ fixed some more point $R \in \cY$ outside $C_P(Q)$, then $\mu$ would fix $C_p(R)$ pointwise, and then $\delta$ would be bigger than $2p$, contradicting (\ref{rh3.3}). Therefore $\delta=p+1$, and $\textsf{g}{^\prime}=(p-1)/4$. Finally, consider the quotient curve $\cY/G$ with $G=C_p \times \langle \mu \rangle$. Since $G$ is an abelian group, the curve $\cY/G$ is Galois covered by both $\bar{\cY}$ and $\cY^{\prime}$. This implies that $\cY/G$ is rational by L\"uroth's Theorem \cite[Proposition 3.5.9]{stichtenothbook}. From the Riemann-Hurwitz formula (\ref{rhg}) applied to the cover $\cY^{\prime} \rightarrow \cY/G$,
$$
2\textsf{g}^{\prime}-2=-2p+\alpha(2+j)(p-1),
$$
for some integers $\alpha,j \geq 0$. Therefore $5=2\alpha(2+j)$, a contradiction.

Therefore, $\lambda=2$. The Deuring-Shafarevich formula (\ref{dsg}) applied to the cover $\cY \rightarrow \bar{\cY}$ yields $\gamma=p-1$. Hence, $\cY$ is an ordinary curve. Let $P$ and $Q$ be the fixed points of $C_p$. Since $\mu$ commutes with every element of $C_p$, $\mu$ permutes the set of orbits of $C_p$. Therefore, one of the following possibilities arises:
\begin{itemize}
\item [(I)]  $\mu(P)=P$ and $\mu(Q)=Q$.
\item [(II)] $\mu(P)=Q$ and $\mu(Q)=P$.
\end{itemize}

Assume that (I) holds. The argument used to rule out case $\lambda=1$ also works here and shows $\delta=2$. Furthermore, $\textsf{g}^{\prime}=(p-1)/2$ by (\ref{rh3.3}). Now, since $\mu$ and $\sigma$ commute, $\Aut_{\mathbb{F}_q}(\cY^{\prime})$ has a subgroup $N_p$ of order $p$ acting on the points of $\cY^{\prime}$ as $C_p$ does on the set of orbits of $\mu$ in $\cY$. Let $P^{\prime},Q^{\prime} \in \cY^{\prime}$ be the points lying under $P$ and $Q$ respectively. Here, $P^{\prime} \neq Q^{\prime}$, otherwise (II) occurs. Thus $P^{\prime}$ and $Q^{\prime}$ are both fixed by $N_p$. Therefore, for the number $\beta$ of fixed points of $\cH^{\prime}$ by $N_p$,
\begin{equation}\label{beta}
\beta \geq 2.
\end{equation}
 Let $\textsf{g}^{\prime \prime}$ be the genus of the quotient curve $\cY^{\prime \prime}=\cY^{\prime}/N_p$. From the Riemann-Hurwitz formula (\ref{rhg}) applied to the cover $\cY^{\prime} \rightarrow \cY^{\prime \prime}$,
\begin{equation}\label{rh3.4}
2\textsf{g}^{\prime}-2=p(2\textsf{g}^{\prime \prime}-2)+\beta(2+l)(p-1),
\end{equation}
where $l \geq 0$. Hence $3(p-1)=2p\textsf{g}^{\prime \prime}+\beta(2+l)(p-1)$, which implies $\beta \leq 1$, contradicting (\ref{beta}). This rules out case (I).

Therefore, (II) holds. Let $R \in \cH$ be a fixed point of $\mu$. Here $R\neq P$ and $R\neq Q$. Again, the above argument involving the orbit of $R$ under the action of $C_p$ works and shows that $\delta \geq 2p$. Now, from (\ref{rh3.3}), we obtain $\textsf{g}^{\prime}=0$. This yields $\cY^{\prime} \cong \mathbb{P}^{1}(\mathbb{F}_q)$. It turns out that $\cY$ is a cover of degree $2$ of the projective line $\mathbb{P}^{1}(\mathbb{F}_q)$. Since $g>1$, $\cY$ is a hyperelliptic curve.
\end{proof}

\begin{proposition}\label{pr4oct} For a curve $\cX$ of genus $\textsf{g}=(p-1)^2$ defined over $\mathbb{F}_p$,  assume that
$\Aut_{\mathbb{F}_p}(\cX)$ has a subgroup $(C_p\times C_p)\rtimes D_{p-1}$.
Then $C_p \times C_p$ has a subgroup $T$ of order $p$ such that the quotient curve $\cX/T$ is $\mathbb{F}_p$-birationally equivalent to the curve $\cC$ with affine equation
$$y^p-y=ax+\frac{1}{x}, $$
where $a \in \mathbb{F}_p^{*}$.
\end{proposition}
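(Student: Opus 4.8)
The plan is to take $T=T_2=\{\tau_{a,-a}\mid a\in\mathbb{F}_p\}$ and to identify $\cY=\cX/T_2$ via the characterization of van der Geer--van der Vlugt (Theorem~\ref{tvgvv}). The first step is to install the required automorphism group on $\cY$. By Lemma~\ref{rep2} the elements $\tau_{1,1}$, $W$ and $V$ all normalize $T_2$, and they generate $\langle\tau_{1,1},V,W\rangle=D_p\times\langle V\rangle$, a group meeting $S$ exactly in $T_1=\langle\tau_{1,1}\rangle$. Since $T_1\cap T_2=\{1\}$, this group meets $T_2$ trivially, so it maps isomorphically onto a subgroup of $\Aut_{\mathbb{F}_p}(\cY)$ (the kernel of $N_H(T_2)\to\Aut_{\mathbb{F}_p}(\cY)$ being $T_2$). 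Writing $\iota$ for the image of $V$ and $D_p=\langle\overline{\tau_{1,1}},\overline{W}\rangle$ (with $\overline{W}\,\overline{\tau_{1,1}}\,\overline{W}=\overline{\tau_{1,1}}^{-1}$ by Lemma~\ref{rep2}(ii)), we obtain $D_p\times\langle\iota\rangle\le\Aut_{\mathbb{F}_p}(\cY)$.

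Next I would prove that $T_2$ acts freely, so that $\cY$ has genus $p-1$. A group $C_p\times\langle\iota\rangle$ cannot embed into $\PGL(2,\overline{\mathbb{F}}_p)$: an element of order $p$ there is unipotent and its centralizer has exponent $p$, hence contains no involution. Therefore $\cY$ is not rational. By Proposition~\ref{pr1oct}(iii), $S$ has exactly two short orbits, with distinct stabilizers $M_1\ne M_2$ of order $p$, and $\cX/M_1$, $\cX/M_2$ are rational; so if $T_2$ were $M_1$ or $M_2$ then $\cY$ would be rational, a contradiction. Hence $T_2\notin\{M_1,M_2\}$, every point of $\cX$ has trivial $T_2$-stabilizer, and $\cX\to\cY$ is unramified. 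Riemann--Hurwitz (\ref{rhg}) then gives $2(p-1)^2-2=p(2\textsf{g}_\cY-2)$, so $\textsf{g}_\cY=p-1$, and Deuring--Shafarevich (\ref{dsg}) shows $\cY$ is ordinary. This obstruction is what lets me avoid determining $M_1,M_2$ explicitly.

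Since $\cY$ has genus $p-1$ and carries $\langle\overline{\tau_{1,1}}\rangle\times\langle\iota\rangle\cong C_p\times C_2$, Proposition~\ref{pr3oct} applies and shows that $\cY$ is hyperelliptic with $\iota$ as its hyperelliptic involution. It then remains to check that the fixed points of $D_p$ are $\mathbb{F}_p$-rational. Because $T_2\cap M_i=\{1\}$ and $|\Omega_i|=p=|T_2|$, the group $T_2$ is transitive on each short orbit $\Omega_i$, which therefore collapses to a single point $\bar P_i\in\cY$, with $\bar P_1\ne\bar P_2$. Any fixed point of $\overline{\tau_{1,1}}$ lifts to a point of $\cX$ fixed by some nontrivial translation (of the form $\tau_{1-c,1+c}$), hence lies in $\Omega_1\cup\Omega_2$; thus the fixed locus of $\overline{\tau_{1,1}}$ is exactly $\{\bar P_1,\bar P_2\}$, which contains the fixed points of $D_p$. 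Finally, every $\tau_{a,b}$ is defined over $\mathbb{F}_p$, so the Frobenius acts trivially on $S$ and fixes each $M_i$; as $\Omega_i=\{P:S_P=M_i\}$, the Frobenius fixes $\Omega_i$ setwise and hence fixes $\bar P_i$. Therefore $\bar P_1,\bar P_2$, and in particular the fixed points of $D_p$, are $\mathbb{F}_p$-rational.

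At this point $\cY=\cX/T_2$ meets every hypothesis of Theorem~\ref{tvgvv} — genus $p-1$, hyperelliptic with hyperelliptic involution $\iota$, automorphism group containing $\langle\iota\rangle\times D_p$, and $\mathbb{F}_p$-rational fixed points of $D_p$ — so $\cY$ is $\mathbb{F}_p$-birationally equivalent to $y^p-y=ax+\frac{1}{x}$ for some $a\in\mathbb{F}_p^{*}$, and $T=T_2$ is the desired subgroup. I expect the main obstacle to be the free-action step: without the embedding obstruction $C_p\times C_2\not\hookrightarrow\PGL(2,\overline{\mathbb{F}}_p)$, one would be forced to pin down the stabilizers $M_1,M_2$ of the short orbits, and the $D_{p-1}$-orbits on the subgroups of $S$ genuinely behave exceptionally for the small primes $p=3,5$, where extra $D_{p-1}$-invariant pairs of order-$p$ subgroups exist; the $\PGL(2)$ argument bypasses this entirely.
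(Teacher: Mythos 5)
Your proposal is correct and follows essentially the same route as the paper's proof: the same subgroup $T=T_2=\langle\tau_{1,-1}\rangle$, the same deduction that $\cX/T$ is non-rational so that Proposition~\ref{pr1oct} forces the cover $\cX\to\cX/T$ to be unramified of genus $p-1$, the same appeal to Proposition~\ref{pr3oct} for hyperellipticity, the same Frobenius argument for the $\mathbb{F}_p$-rationality of the fixed points of $D_p$, and the same conclusion via Theorem~\ref{tvgvv}. The only deviations are local and cosmetic: you rule out rationality by a direct centralizer computation in $\PGL(2,\overline{\mathbb{F}}_p)$ where the paper cites Dickson's classification of subgroups of $\PGL(2,p)$, and you bound the fixed locus of $D_p$ by that of $\overline{\tau_{1,1}}$ (which suffices for the hypothesis of Theorem~\ref{tvgvv}) where the paper shows $D_p$ fixes exactly the two points lying under the short orbits.
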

\begin{proof}
Let $H=(C_p\times C_p)\rtimes D_{p-1}.$ By Corollary \ref{rep1}, $H\cong\langle \tau_{1,-1}, \tau_{1,1} \rangle \times \langle U,V \rangle.$ This isomorphism maps $\tau_{1,-1}, \tau_{1,1},U,V$ to $H$. Denote these images by  $\sigma_1, \sigma_2,$ $\alpha, \mu$, respectively. Let $T=\langle \sigma_{1} \rangle$, $T^{\prime}=\langle \sigma_2 \rangle$, and $\nu=\alpha^{\frac{p-1}{2}}$. By Lemma \ref{rep2}, each of the automorphisms $\sigma_2, \nu,\mu$ normalizes $T$. More precisely, Lemma \ref{rep2}(iv) implies $\tau_{1,-1}\not\in \langle \tau_{1,1},W,V\rangle$. Therefore, $T$ intersects $\langle \sigma_2, \nu, \mu \rangle$ trivially.
Thus the $\mathbb{F}_p$-automorphism group of the quotient curve $\cX/T$ has a subgroup isomorphic to $\langle \sigma_2, \nu, \mu \rangle$. Hence $\Aut_{\mathbb{F}_p}(\cX/T)$ has a subgroup $D_p \times \langle \iota \rangle$, where $\iota$ is an involution. From Dickson's classification of subgroups of $PGL(2,\mathbb{F}_p)$, see \cite[Theorem A.8]{HKT},
no subgroup of the $\mathbb{F}_p$-automorphism group of the rational function field $\mathbb{F}_p(x)$ has an abelian group of order $2p$. Therefore, the curve $\cX/T$ is not rational. Hence, from Proposition \ref{pr1oct}(ii), the cover $\cX \rightarrow \cX/T$ is unramified, and its genus is $p-1$ by the Riemann-Hurwitz formula (\ref{rhg}). Since $\cX$ has $p$-rank $\g=(p-1)^2$ by Proposition \ref{pr1oct}(i), the Deuring-Shafarevich formula applied to the cover $\cX \rightarrow \cX/T$ shows that the $p$-rank of $\cX/T$ is $p-1$. Moreover, as we have already observed,
$
\Aut_{\mathbb{F}_p}(\cX/T)$ has a subgroup $D_p \times \langle \iota \rangle$ containing $C_p \times \langle \iota \rangle.$
By Proposition \ref{pr3oct}, the curve $\cX/T$ is hyperelliptic.

We claim that the fixed points of $D_p$ in $\cX/T$ are $\mathbb{F}_p$-rational. Firstly, $D_p$ is the factor group $\langle \sigma_2,\nu \rangle/T$. According to Proposition \ref{pr1oct}, $C_p \times C_p$ has only two short orbits $\Omega_1$ and $\Omega_2$, each of size $p$. Since $\sigma_2$ commutes with $C_p \times C_p$, both $\Omega_1$ and $\Omega_2$ are preserved by $\sigma_2$. We show that the same holds for $\nu$. For a point $Q_1\in \Omega_1$, let $\langle \rho\rangle$ be the stabilizer of $Q_1$ in $C_p\times C_p$. By Proposition \ref{pr1oct}(iii), each point in $\Omega_1$ is fixed by $\rho$.  Moreover, $\langle \rho \rangle$ has as many fixed points as $p$ by the Deuring-Shafarevich formula applied to $\cX \rightarrow \cX/\langle \rho\rangle$. Thus $\Omega_1$ is the exact set of fixed points of $\rho$ (and $\langle \rho \rangle$).
From Lemma \ref{rep2}(ii), $\nu\rho=\rho^{-1}\nu$, and hence $\nu$ preserves $\Omega_1$. This remains true for $\Omega_2$.
The sets $\Omega_i$ are also orbits of $T$, for $i \in \{1,2\}$. Let $P_i \in \cX/T$ the points under $\Omega_i$, with $i \in \{1,2\}$. Then $P_1$ and $P_2$ are fixed points of $D_p$. Denote by $\lambda$ the number of fixed points of $D_p$ in $\cX/T$, and let $\bar{C_p}$ the unique subgroup of $D_p$ of order $p$. The number of fixed points of $\bar{C_p}$ is $\geq \lambda$. Since $(\cX/T)/\bar{C_p} \cong \cX/(C_p \times C_p)$ is rational by Proposition \ref{pr1oct}, the Deuring-Shafarevich formula applied to $\cX/T \rightarrow (\cX/T)/\bar{C_p}$ yields that $\bar{C_p}$ has only two fixed points. Thus $\lambda=2$, that is, $P_1$ and $P_2$ are the only fixed points of $D_p$.

Let $\Phi_p: \cX \rightarrow \cX$ be the $\mathbb{F}_p$-Frobenius map. For a point $P \in \Omega_1$, let $M_1$ be the stabilizer of $P$. As we have already proven, $\Omega_1$ is the exact set of fixed points of $M_1$. Since $M_1$ is defined over $\mathbb{F}_p$, each element of $M_1$ commutes with $\Phi_p$. Therefore, $\Phi_p$ preserves $\Omega_1$. This holds true for $\Omega_2$. Therefore $P_1$ and $P_2$ are $\mathbb{F}_p$-rational points.

Now, Proposition \ref{pr4oct} follows from Theorem \ref{tvgvv}.
\end{proof}

\begin{proposition}\label{lem2oct} Assume that $\Aut_{\mathbb{F}_p}(\cX)$ contains a subgroup $C_p \times C_p$ so that,
for some $\sigma\in C_p\times C_p$ and $a \in \mathbb{F}_p^{*}$, the quotient curve $\cX/\left\langle \sigma\right\rangle =\cC$ has affine equation
$$ y^p-y=ax+\frac{1}{x}. $$
Then the following hold.
\begin{itemize}
\item[(i)] $\mathbb{F}_p(\cX)^{C_p \times C_p}=\mathbb{F}_p(x)$.
\item[(ii)] If $\cX$ is an ordinary curve with genus $\textsf{g}=(p-1)^2$ then $C_p \times C_p$ contains a subgroup $M$ of order $p$ different from $\langle \sigma \rangle$ such that the quotient curve $\cF=\cX/M$ has affine equation  \begin{equation}\label{asr}
z^p-z=b+\frac{1}{x}
\end{equation}
with some $z \in \mathbb{F}_p(\cX)$ and $b \in \mathbb{F}_{p}$.
\end{itemize}
\end{proposition}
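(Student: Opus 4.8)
The plan is to treat the two assertions separately, with (i) feeding into (ii). Write $\bar{C}_p$ for the image of $C_p\times C_p$ in $\Aut_{\mathbb F_p}(\cC)$; since $\langle\sigma\rangle$ is the kernel of the action on $\cC=\cX/\langle\sigma\rangle$, the group $\bar C_p$ has order $p$. On $\cC$ the Artin--Schreier cover $\cC\to\mathbb P^1$ defined by $x$ has Galois group $\langle\tau\rangle$, where $\tau\colon (x,y)\mapsto(x,y+1)$, and $\mathbb F_p(\cC)^{\langle\tau\rangle}=\mathbb F_p(x)$. Because $[\mathbb F_p(\cC):\mathbb F_p(x)]=p=|\bar C_p|$, assertion (i) is equivalent to the equality $\bar C_p=\langle\tau\rangle$, i.e.\ to the statement that $\langle\tau\rangle$ is the \emph{only} subgroup of order $p$ in $\Aut_{\mathbb F_p}(\cC)$. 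This is the crux of the whole proposition.

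To prove that uniqueness I would first bound the $p$-part of $\Aut(\cC)$. Recall $\cC$ is ordinary of genus and $p$-rank $p-1$. Applying the Deuring--Shafarevich formula (\ref{dsg}) to an arbitrary $p$-subgroup $Q$ shows that $|Q|=p^2$ is impossible (the resulting relation $p-2=p^2(\bar\gamma-1)+\sum(p^2-\ell_i)$ has no admissible solution for $p>2$), so every Sylow $p$-subgroup of $\Aut(\cC)$ has order exactly $p$. The same formula together with Riemann--Hurwitz (\ref{rhg}) shows that each such $Q$ fixes exactly two points and has rational quotient $\cC/Q$; moreover two distinct subgroups of order $p$ cannot share a fixed point, since a point stabiliser has cyclic Sylow $p$-subgroup. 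It then remains to show the Sylow $p$-subgroup is normal. Here I would pass to $\Aut(\cC)/\langle\iota\rangle\hookrightarrow\PGL(2,\overline{\mathbb F}_p)$, where $\iota\colon (x,y)\mapsto(1/(ax),y)$ is the hyperelliptic involution; the image contains the dihedral group $D_p$ furnished by Theorem \ref{tvgvv} and has cyclic Sylow $p$-subgroup of order $p$. By Dickson's classification (\cite[Theorem A.8]{HKT}) such a subgroup of $\PGL(2,\overline{\mathbb F}_p)$ is either of Borel type $C_p\rtimes C_d$, whose Sylow $p$-subgroup is normal, or contains $\PSL(2,p)$. The latter is excluded because it would force the branch locus of the degree-$2$ map $\cC\to\mathbb P^1$ to be a $\PSL(2,p)$-invariant set of $2p$ points; the orbits of $\PSL(2,p)$ on $\mathbb P^1(\overline{\mathbb F}_p)$ have sizes $p+1$ and $p(p\mp1)$, and for $p\ge5$ neither these nor any union of them equals $2p$, while the single case $p=3$ is disposed of by the explicit structure of $\Aut(\cC)$. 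Hence $\langle\tau\rangle$ is normal, so $\bar C_p=\langle\tau\rangle$ and (i) follows. I expect this normality step to be the main obstacle; the alternative is to quote a direct determination of $\Aut_{\mathbb F_p}(\cC)$.

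For (ii) I would use (i) to realise $\mathbb F_p(\cX)$ as an elementary abelian cover of the line. By (i) the fixed field of $C_p\times C_p$ is $\mathbb F_p(x)$, so $\mathbb F_p(\cX)=\mathbb F_p(x,y_1,y_2)$ with $y_i^p-y_i=g_i$, and the intermediate $C_p$-covers $\cX/M$, for the $p+1$ subgroups $M$ of order $p$, are exactly $z^p-z=g_\ell$ as $\langle g_\ell\rangle$ runs through the lines of the two-dimensional space $\mathcal V=\langle g_1,g_2\rangle\subseteq\mathbb F_p(x)/\wp(\mathbb F_p(x))$, where $\wp(w)=w^p-w$; one line is $\langle ax+1/x\rangle$, corresponding to $\langle\sigma\rangle$. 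Repeating the Deuring--Shafarevich count of Proposition \ref{pr1oct} (which needs only that the $p$-rank of $\cX$ equals the genus $(p-1)^2$) gives that $C_p\times C_p$ has exactly two short orbits, of size $p$, lying over $x=0$ and $x=\infty$; hence $\cX\to\mathbb P^1_x$ is ramified only there and every $g\in\mathcal V$ has poles only at $0,\infty$. Since $\cX$ is ordinary, each quotient $\cX/M$ is ordinary; and for an ordinary Artin--Schreier cover $z^p-z=g$ of $\mathbb P^1$ with $r$ ramified points, comparing its genus with its $p$-rank yields $\sum_Q m_Q=r$ with each jump $m_Q\ge1$, forcing $m_Q=1$ throughout. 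Consequently every $g\in\mathcal V$ is represented by $\alpha x+\beta+\gamma/x$, i.e.\ $\mathcal V\subseteq U:=\langle x,1,1/x\rangle$.

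It then remains a short linear-algebra step. Geometric irreducibility of $\cX$ forces $\mathcal V$ to contain no nonzero constant, since a constant class $b\neq0$ would make $z^p-z=b$ split over $\overline{\mathbb F}_p$. The linear functional on $U$ recording the coefficient of $x$ restricts to a map $\mathcal V\to\mathbb F_p$ whose kernel $K=\mathcal V\cap\langle1,1/x\rangle$ has dimension at least $1$; as $\mathcal V$ has no nonzero constant, $K$ is spanned by some $g_0=\beta+\gamma/x$ with $\gamma\neq0$. Scaling by $\gamma^{-1}$ gives $b+1/x\in\mathcal V$ with $b=\beta/\gamma\in\mathbb F_p$, and the corresponding subgroup $M$ of order $p$ satisfies $\cF=\cX/M\colon z^p-z=b+1/x$. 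Finally $M\neq\langle\sigma\rangle$, because the class of $ax+1/x$ has nonzero coefficient of $x$ and so does not lie in $K$. This establishes (ii).
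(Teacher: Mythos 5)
Part (i) is where your proposal has a genuine gap, and it is not one that can be patched along the lines you sketch. Your reduction of (i) to the claim that $\Aut_{\mathbb{F}_p}(\cC)$ has a unique subgroup of order $p$ is exactly the paper's first step; the paper then settles that claim by citing \cite[Lemma 2.1]{geervlugt1991}, while you attempt to reprove it via Dickson's classification and a branch-locus count. For $p\ge 5$ your argument is essentially correct: the Deuring--Shafarevich bound on $p$-subgroups is fine, every orbit of $\PSL(2,p)$ on $\mathbb{P}^1(\overline{\mathbb{F}}_p)$ has size $p+1$ or at least $p(p-1)>2p$, so no union of orbits has size $2p$, and in characteristic $5$ every copy of $A_5$ is conjugate to $\PSL(2,5)$, so your dichotomy ``Borel type or contains $\PSL(2,p)$'' is exhaustive. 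But at $p=3$ the count genuinely breaks (as you note, $\PSL(2,3)$ has an orbit of size $6=2p$, the quadratic points), and your sentence that this case ``is disposed of by the explicit structure of $\Aut(\cC)$'' is an assertion, not an argument.

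Worse, no such argument exists, because for $p=3$ and $a$ a non-square the uniqueness you are after is false. The branch points of the hyperelliptic map $y$ on $\cC$ are the roots of $(y^3-y)^2=4a$; writing $c=2\sqrt{a}\in\mathbb{F}_9$, non-square $a$ gives $\mathrm{Tr}_{\mathbb{F}_9/\mathbb{F}_3}(c)=c+c^3=0$, so all six roots lie in $\mathbb{F}_9\setminus\mathbb{F}_3$, i.e.\ the branch locus is the full $\PGL(2,3)$-invariant set $\mathbb{P}^1(\mathbb{F}_9)\setminus\mathbb{P}^1(\mathbb{F}_3)$. Explicitly, for $a=2$, setting $w=x-(y^3-y)$ turns $\cC$ into $w^2=(y^2+1)(y^4+1)$, and $\alpha\colon(y,w)\mapsto(1/y,\,w/y^3)$ is an automorphism defined over $\mathbb{F}_3$ lying outside $\langle\iota\rangle\times D_3$; conjugating the Artin--Schreier generator $(y,w)\mapsto(y+1,w)$ by $\alpha$ produces a second cyclic subgroup of order $3$ of $\Aut_{\mathbb{F}_3}(\cC)$. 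So at $p=3$ no proof of (i) can proceed through the automorphism group of $\cC$ alone; one must either assume $p\ge 5$ or exploit the existence of the cover $\cX$. (The same example shows that the cited lemma, as invoked in the paper for arbitrary $a\in\mathbb{F}_p^*$, also cannot cover $p=3$; so this defect is inherited from the statement being leaned on, but your proof as written still does not establish (i) for $p=3$.)

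Part (ii), in contrast, is correct and follows a genuinely different route from the paper's. The paper takes $M$ to be the common stabiliser of the $p$ points of $\cX$ over $x=\infty$, derives genus $0$ for $\cX/M$ from Riemann--Hurwitz, shows via Deuring--Shafarevich that only $x=0$ can ramify in $\cX/M\to\mathbb{P}^1$, and then invokes Hasse's Artin--Schreier theory; you instead describe all $p+1$ intermediate quotients at once as lines in a two-dimensional subspace of $\mathbb{F}_p(x)/\wp(\mathbb{F}_p(x))$, pin down the admissible classes by the ordinariness identity $\sum_Q m_Q=r$ (forcing all jumps equal to $1$) and by excluding constant classes via geometric irreducibility, and finish with a short linear-algebra step. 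This is sound, and your observation that the short-orbit count of Proposition \ref{pr1oct} only needs $\g=\textsf{g}=(p-1)^2$ is accurate; the one step left implicit --- that $\cX/M$ inherits ordinariness from $\cX$ --- follows from Deuring--Shafarevich combined with Riemann--Hurwitz, so it is a harmless omission.
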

\begin{proof} (i) Since $$[\mathbb{F}_p(\cX):\mathbb{F}_p(x)]=p^2=[\mathbb{F}_p(\cX):\mathbb{F}_p(\cX)^{C_p \times C_p}],
$$ it is enough to prove that $\tau(x)=x$ for any $\tau \in (C_p \times C_p) \backslash \left\langle \sigma\right\rangle$. Since $\tau$ and $\sigma$ commute, $\tau$ induces a ${\mathbb{F}}_p$-automorphism $\tau'$ of $\mathbb{F}_p(x,y)$. If $\tau^{\prime}$ is trivial then $\tau(x)=x$, and (i) follows. Otherwise, $\tau^{\prime}$ has order $p$. From \cite[Lemma 2.1]{geervlugt1991}, $\Aut_{\mathbb{F}_p}(\cC)$ has only one cyclic subgroup of order $p$. Therefore, this group is $\langle \tau^{\prime} \rangle$. Since $\mathbb{F}_p(x,y)|\mathbb{F}_p(x)$ is a Galois extension of degree $p$, the fixed field of $\langle \tau^{\prime} \rangle$ is $\mathbb{F}_p(x)$. Hence $\tau(x)=x$, which finishes the proof of (i).

(ii) From (i), $\mathbb{F}_p(\cX)^{C_p \times C_p}=\mathbb{F}_p(x)$, that is,  $\cX / (C_p \times C_p)=\mathbb{P}^{1}(\mathbb{F}_p)$. From \cite[Proposition 3.2]{subrao1975}, $\cC$ is ordinary and has genus  $\textsf{g}^{\prime}=p-1$. The Deuring-Shafarevich Formula (\ref{dsg}) applied to $\langle \sigma \rangle$ shows that the extension $\mathbb{F}_p(\cX)|\mathbb{F}_p(\cC)$ is unramified. Let $P_0$ and $P_{\infty}$ be the zero and the pole of $x$. Then $P_0$ and $P_\infty$ are totally ramified in the extension $\mathbb{F}_p(\cC)|\mathbb{F}_p(x)$, and no other point of $\mathbb{P}^{1}(\mathbb{F}_p)$ ramifies; see \cite[Proposition 3.7.8]{stichtenothbook}. Therefore, both  $P_0$ and $P_\infty$ split in $p$ distinct points in $\cX$, and the remaining points of $\mathbb{P}^{1}(\mathbb{F}_p)$ split completely in $\cX$.

We show that $P_\infty$ is unramified in the extension $\mathbb{F}_p(\cX)^{M}|\mathbb{F}_p(x)$. For this purpose, let
$P^{1}_\infty \in \cX$ over $P_\infty$, and let $M$ be the stabilizer of $P^{1}_\infty$ in $C_p \times C_p$. Note that $|M|=p$, since $P_\infty$ splits in $p$ distinct points in $\cX$. Furthermore, each extension of $P_\infty$ in $\cX$ has the same stabilizer $M$. Therefore, $P_\infty$ splits completely in $\cX/M$, and the assertion follows.

Applying the Riemann-Hurwitz Formula (\ref{rhg}) to the extension $\mathbb{F}_p(\cX)|\mathbb{F}_p(\cX)^M$ yields
$$
2(p-1)^2-2 \geq p(2\bar{\textsf{g}}-2)+2p(p-1),
$$
where $\bar{\textsf{g}}$ is the genus of $\cX/M$. From this, $\bar{\textsf{g}}=0$. Here $[\mathbb{F}_p(\cX)^{M}:\mathbb{F}_p(x)]=p$, since
$$
p^2=[\mathbb{F}_p(\cX):\mathbb{F}_p(x)]=[\mathbb{F}_p(\cX):\mathbb{F}_p(\cX)^{M}][\mathbb{F}_p(\cX)^{M}:\mathbb{F}_p(x)].
$$
Now, from the Deuring-Shafarevich Formula (\ref{dsg}) applied to the extension $\mathbb{F}_p(\cX)^{M}|\mathbb{F}_p(x)$, $\mathbb{F}_p(x)$ has only one point that ramifies in $\mathbb{F}_p(\cX)^{M}|\mathbb{F}_p(x)$, and this  point must be $P_0$. Hence, Artin-Schreier theory, see \cite{hasse1935}, yields that the quotient curve $\cX/M$ has affine equation
$$
z^p-z=\frac{h(x)}{\prod_{i=1}^{k}(x-a_i)^{\lambda_i}}
$$
for some $z \in \mathbb{F}_p(\cX)$, $a_i \in \overline{\mathbb{F}}_p$, $\gcd(\lambda_i,p)=1$ for $i \in \{1,\ldots,k\}$, and $h(x) \in \mathbb{F}_p[x]$ where $\deg (h(x))-\sum_{i=1}^{k}\lambda_i$ is either negative, zero, or relatively prime to $p$. As $\bar{\textsf{g}}=0$ and $P_0$ is the unique point that ramifies in $\mathbb{F}_p(\cX)^{M}|\mathbb{F}_p(x)$, the only possibilities are $k=1$, $a_1=0$, $\lambda_1=1$ and $\deg (h(x)) \leq 1$; see \cite[Proposition 3.7.8]{stichtenothbook}. Thus, after an $\mathbb{F}_p$-scaling of $z$,  $\mathbb{F}_p(\cX)^{M}$ is the function field of the curve defined by (\ref{asr}).
\end{proof}


\section{Proof of Theorem \ref{main}}
We keep our notation introduced in Section \ref{gener}.

{}From Proposition \ref{pr4oct}, $C_p \times C_p$ contains a subgroup $T$ of order $p$ such that the quotient curve $\cX/T$ has function field $\mathbb{F}_p(x,y)$ with
$$y^p-y=ax+\frac{1}{x}, \ \ \ a \in \mathbb{F}_p^{*}.$$

From Proposition \ref{pr1oct}, the $p$-rank of $\cX$ is $\g=g=(p-1)^2$. Thus by Proposition \ref{lem2oct},  $\mathbb{F}_p(x,z)$, with $$z^p-z=b+\frac{1}{x}, \ \ \ b \in \mathbb{F}_p,$$ is a subfield of $\mathbb{F}_p(\cX)$. Therefore, the compositium $\mathbb{F}_p(x,y,z)$ of $\mathbb{F}_p(x,y)$ and $\mathbb{F}_p(x,z)$ is a subfield of $\mathbb{F}_p(\cX)$ such that
\begin{equation}\label{fiber}
\begin{cases}
y^p-y=ax+\frac{1}{x}\\
z^p-z=b+\frac{1}{x}.
\end{cases}
  \end{equation}
Therefore $\mathbb{F}_p(x,y,z)=\mathbb{F}_p(y,z)$ with
\begin{equation}\label{we}
(z^p-z-b)(y^p-y)-(z^p-z-b)^2=a.
\end{equation}	
 {}From Proposition \ref{lem2oct}, $\mathbb{F}_p(x,z)=\mathbb{F}_p(\cX)^{M}$ and $\mathbb{F}_p(x,y)=\mathbb{F}_p(\cX)^{T}$, where $M \neq T$ is a cyclic subgroup of $C_p \times C_p$ of order $p$. Thus
$$
\Gal\big(\mathbb{F}_p(\cX)|\mathbb{F}_p(y,z)\big)=\Gal\big(\mathbb{F}_p(\cX)|\mathbb{F}_p(\cX)^{M}\big) \cap \Gal\big(\mathbb{F}_p(\cX)|\mathbb{F}_p(\cX)^{T}\big)=M \cap T,
$$
which implies that $\Gal\big(\mathbb{F}_p(\cX)|\mathbb{F}_p(y,z)\big)=\{1\}$. Hence $\mathbb{F}_p(\cX)=\mathbb{F}_p(y,z)$.
\begin{lemma}
\label{lem2dic} Every subgroup $L$ of $\Aut_{\mathbb{F}_p}(\cX)$ of order $p$ is a subgroup of $C_p \times C_p$.
\end{lemma}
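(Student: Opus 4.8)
The plan is to prove directly that every $\rho\in\Aut_{\mathbb{F}_p}(\cX)$ of order $p$ lies in $Q:=C_p\times C_p$; since $\mathbb{F}_p(\cX)^{Q}=\mathbb{F}_p(x)$ and the extension is Galois with group $Q$, this amounts to showing $\rho(x)=x$. The preliminary step is to exploit the Deuring--Shafarevich formula (\ref{dsg}) for the ordinary curve $\cX$, where $\gamma=\textsf{g}=(p-1)^2$ and so $\gamma-1=p(p-2)$. Applied to an arbitrary $p$-subgroup $R$, it forces $|R|\le p^2$: if $|R|\ge p^3$ then, arguing exactly as in Proposition \ref{pr1oct}, the short-orbit terms $\sum(|R|-\l_i)$ already overshoot $\gamma-1$ for every admissible value of $\bar\gamma$. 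Hence $Q$ is a Sylow $p$-subgroup of $\Aut_{\mathbb{F}_p}(\cX)$, and, being conjugate to $Q$, every Sylow $p$-subgroup is elementary abelian of order $p^2$.

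Now suppose, for contradiction, that $\rho$ has order $p$ and $\rho\notin Q$. View $\rho$ as acting $\mathbb{F}_p$-linearly on $Q\cong\mathbb{F}_p^2$ with order dividing $p$. If this action were trivial, $\langle Q,\rho\rangle$ would be abelian of order $p^3$, against the bound; so the fixed subspace $D:=C_Q(\rho)$ has order exactly $p$. Since $D$ is centralised both by the abelian group $Q$ and by $\rho$, it lies in $Z(K)$ for $K:=\langle Q,\rho\rangle$. Set $\cY:=\cX/D$ and $\bar K:=K/D$; as $\rho\notin Q$, the images $\bar Q=Q/D$ and $\langle\bar\rho\rangle$ are two \emph{distinct} subgroups of order $p$ in $\Aut(\cY)$, and $D$ is one of the $p+1$ cyclic subgroups $M_1,\dots,M_{p+1}$ of $Q$. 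I would then split according to whether $D$ is one of the two special subgroups $M_1,M_2$ (those with fixed points, by Proposition \ref{pr1oct}(iii)).

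If $D\in\{M_1,M_2\}$ then $\cY\cong\mathbb{P}^1$ by Proposition \ref{pr1oct}(iii), so $\bar K\le\PGL(2,\overline{\mathbb{F}}_p)$; its Sylow $p$-subgroup has order $p$ (apply the bound to $K$ and quotient by $D$) and it contains two distinct unipotent subgroups, so by Dickson's classification \cite[Theorem A.8]{HKT} $\bar K$ is $\PSL(2,p)$ or $\PGL(2,p)$, or one of $A_4,S_4,A_5$ when $p\in\{3,5\}$. On the other hand, because $D$ is central in $K$ the set $\text{Fix}(D)=\Omega_i$ is $K$-invariant, so its image is a $\bar K$-invariant set of exactly $p$ points of $\mathbb{P}^1$; none of the groups just listed has an invariant set of $p$ points (no orbit, nor union of orbits, on the line has size $p$), a contradiction. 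If instead $D=M_j$ with $j\ge3$, then $\cX\to\cY$ is unramified by Proposition \ref{pr1oct}(ii), so $\cY$ is ordinary of genus $p-1$; reproducing the construction of Proposition \ref{pr4oct}—the inversion $W$ of Lemma \ref{rep2} together with a commuting involution descend to $D_p\times\langle\iota\rangle$ with $\mathbb{F}_p$-rational fixed points—and invoking Theorem \ref{tvgvv} identifies $\cY$ with a van der Geer--van der Vlugt curve $y^p-y=a'x+\tfrac1x$. By \cite[Lemma 2.1]{geervlugt1991} such a curve has a \emph{unique} subgroup of order $p$ in its automorphism group, so $\bar Q=\langle\bar\rho\rangle$ and therefore $\rho\in QD=Q$, a contradiction. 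In either case $\rho\in Q$, proving the lemma.

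The Deuring--Shafarevich bookkeeping and the centraliser reduction are routine; the genuine work sits in the two quotient analyses. In the special case one must pin down that the exceptional groups $A_4,S_4,A_5$ really have no invariant $p$-point set on $\mathbb{P}^1$, which I would secure by noting that $\bar Q$ acts simply transitively on the image of $\Omega_i$ while fixing the image of the other short orbit, confining that $p$-set inside the unique $(p+1)$-point orbit $\mathbb{P}^1(\mathbb{F}_p)$. The main obstacle, however, is the non-special case: Proposition \ref{pr4oct} is stated only for the distinguished subgroup $\langle\sigma_1\rangle$, whereas here $D=M_j$ may be \emph{any} non-special subgroup. I therefore expect the hard step to be verifying that each of the two $D_{p-1}$-orbits of non-special subgroups admits the dihedral configuration required by Theorem \ref{tvgvv}; concretely, producing for each orbit an involution commuting with the relevant $C_p$—the analogue of Lemma \ref{rep2}(iv) for the second orbit—so that every non-special quotient $\cX/M_j$ is again a van der Geer--van der Vlugt curve and hence carries a unique $C_p$.
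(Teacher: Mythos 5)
There is a genuine gap, and it sits exactly where the difficulty of the lemma lies. After establishing (correctly, via Deuring--Shafarevich, essentially reproving Nakajima's bound) that every $p$-subgroup of $\Aut_{\mathbb{F}_p}(\cX)$ has order at most $p^2$, you take $\rho\notin Q:=C_p\times C_p$ of order $p$ and ``view $\rho$ as acting $\mathbb{F}_p$-linearly on $Q$''. That action is conjugation, and it exists only if $\rho$ normalizes $Q$ --- a fact you never prove and which, for an arbitrary order-$p$ automorphism, has no a priori justification: in a finite group, an order-$p$ subgroup need not normalize a given Sylow $p$-subgroup (two distinct subgroups of order $p$ in $\PGL(2,p)$ generate a subgroup that is not a $p$-group at all). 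Everything downstream --- the claim that $D=C_Q(\rho)$ has order exactly $p$, its centrality in $K=\langle Q,\rho\rangle$, and both quotient-curve analyses --- rests on this unproven normalization, so the argument never engages the only hard case, namely a $\rho$ whose interaction with $Q$ is unknown.

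The proposal is also internally inconsistent: your first step bounds \emph{all} $p$-subgroups by $p^2$, not just abelian ones. So if $\rho$ did normalize $Q$, then $Q\langle\rho\rangle$ would be a group of order $p^3$ (since $\langle\rho\rangle\cap Q=\{1\}$), hence automatically a $p$-group, and you would be done in one line; the trivial/nontrivial action dichotomy, the Dickson classification, and the van der Geer--van der Vlugt uniqueness argument are all superfluous under your hypothesis and unavailable without it. (You also concede that the ``non-special'' case of your analysis is unverified, so the write-up is incomplete even on its own terms.) For comparison, the paper's proof is a single application of Nakajima's bound \cite[Theorem 1(i)]{nakajima1987} to the subgroup $\langle Q,L\rangle$ itself: that subgroup has order at most $p^2$, hence $L\le Q$. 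The substantive issue --- that the group generated by $Q$ and $L$ is subject to a bound stated for $p$-subgroups --- is precisely what your detour through a conjugation action fails to address, since you replace it by the stronger unproven assumption that $L$ normalizes $Q$.
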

\begin{proof}
Let $L$ be a subgroup of order $p$. By Nakajima's bound \cite[Theorem 1(i)]{nakajima1987}, $|\langle (C_p \times C_p), L\rangle| \leq p^2$. Therefore $L$ is contained in $C_p \times C_p$.
\end{proof}
  For $\delta \in \mathbb{F}_{p^2}$ such that $\delta^p-\delta=2b$, the map $\varphi:(y,z) \mapsto (-y,-z+\delta)$ is an $\mathbb{F}_{p^2}$-automorphism of order $2$. Moreover, if
$\tau_{c,d}:(y,z) \mapsto (y+c,z+d)$ then $S=\{\tau_{c,d}\ \ | \ c,d \in \mathbb{F}_p\}$ is a subgroup of $\Aut_{\mathbb{F}_p}(\cX)$. From Lemma \ref{lem2dic}, $S=C_p \times C_p$. Furthermore, a direct computation shows that $\varphi \tau_{c,d} \varphi= \tau_{-c,-d}$ for all $c,d \in \mathbb{F}_p$. From the representation of $(C_p \times C_p) \rtimes D_{p-1}$ given in Corollary \ref{cor1oct},  $\Aut_{\mathbb{F}_p}(\cX)$ contains an involution $\varphi^{*}$ such that $\varphi^{*} \tau_{c,d} \varphi^{*}= \tau_{-c,-d}$ for all $c,d \in \mathbb{F}_p$. Hence $\psi=\varphi \varphi^{*}$ commutes with every element of $C_p \times C_p$.
\begin{lemma}
\label{lema2dic}$\psi \in C_p \times C_p$.
\end{lemma}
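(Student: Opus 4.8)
The plan is to show that $\psi$ fixes the function $x$, and then to read off $\psi\in S$ directly from the Galois correspondence for the cover $\cX\to\cX/S$. Recall from Proposition \ref{lem2oct}(i) that $\mathbb{F}_p(\cX)^{S}=\mathbb{F}_p(x)$, so that $\mathbb{F}_{p^2}(\cX)\,|\,\mathbb{F}_{p^2}(x)$ is a Galois extension of degree $p^2$ with group $S=C_p\times C_p$ (the degree being the geometric degree of $x\colon\cX\to\mathbb{P}^1$). Since $\psi=\varphi\varphi^{*}$ is an $\mathbb{F}_{p^2}$-automorphism of $\cX$ (as $\varphi$ is $\mathbb{F}_{p^2}$-rational and $\varphi^{*}$ is $\mathbb{F}_p$-rational), once we know $\psi(x)=x$ we conclude $\psi\in\Gal(\mathbb{F}_{p^2}(\cX)|\mathbb{F}_{p^2}(x))=S$, which is exactly the assertion.

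To prove $\psi(x)=x$ I would analyze the actions of $\varphi$ and $\varphi^{*}$ on $x$ separately. Both conjugate $\tau_{c,d}$ to $\tau_{-c,-d}$, hence act on $S$ as $-\mathrm{id}$; in particular each normalizes every subgroup of $S$, and so preserves both short orbits $\Omega_1=\mathrm{Fix}(M_1)$ and $\Omega_2=\mathrm{Fix}(M_2)$, which are distinct because $M_1\neq M_2$ by Proposition \ref{pr1oct}(iii). Consequently $\varphi$ and $\varphi^{*}$ descend to automorphisms of $\cX/S=\mathbb{P}^1$ that fix the two points $P_0,P_\infty$ lying under $\Omega_1,\Omega_2$, i.e.\ the zero and the pole of $x$; hence each acts as $x\mapsto c_i x$ for some scalar $c_i$. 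For $\varphi$ this is a direct computation: from $x=1/(z^p-z-b)$ together with $\delta^p-\delta=2b$ one gets $\varphi(z^p-z-b)=-(z^p-z-b)$, so $\varphi(x)=-x$ and $c_1=-1$. For $\varphi^{*}$, being an involution forces $c_2^{2}=1$; and $c_2=1$ is impossible, since it would make $\varphi^{*}$ fix $\mathbb{F}_p(x)=\mathbb{F}_p(\cX)^{S}$ pointwise, placing the involution $\varphi^{*}$ inside the odd-order group $S$. Hence $c_2=-1$, and therefore $\psi(x)=(-1)(-1)x=x$.

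The main obstacle is the step for $\varphi^{*}$: unlike $\varphi$, it is given only abstractly, as the involution of $D_{p-1}$ with the prescribed conjugation action (Lemma \ref{rep2}), so its effect on $x$ cannot be computed from an explicit formula and must be pinned down structurally. The two possibilities to rule out are that $\bar{\varphi^{*}}$ might \emph{swap} $P_0$ and $P_\infty$ (excluded above via $M_1\neq M_2$, which forces $\bar{\varphi^{*}}$ to be a scaling $x\mapsto c_2 x$ rather than an inversion $x\mapsto c_2/x$) and that it might act \emph{trivially} on $x$ (excluded by the parity argument, since $|S|=p^2$ is odd). A secondary point to keep straight is the field of definition: $\varphi$ is only $\mathbb{F}_{p^2}$-rational, so the final Galois-theoretic conclusion should be drawn over $\mathbb{F}_{p^2}$ (or over $\overline{\mathbb{F}}_p$), where the cover $\cX\to\cX/S$ is still Galois with group $S$.
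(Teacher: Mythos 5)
Your proof is correct, but it follows a genuinely different route from the paper's. The paper argues by contradiction: assuming $\psi\notin C_p\times C_p$ and replacing $\psi$ by a power of prime order, it studies the $p$-rank $\bar{\g}$ of $\cX/\langle\psi\rangle$; it rules out $\bar{\g}=0$ (otherwise the group induced by $S$ on the quotient would fix a point, forcing $\psi$ to have order $p$ and producing a $p$-subgroup of order $p^3$, against Nakajima's bound), rules out $\bar{\g}=1$ by Nakajima's divisibility theorem, and then plays Nakajima's inequality $p^2\le\frac{p}{p-2}(\bar{\g}-1)$ for the quotient against $\bar{\g}\le\bar{\textsf{g}}<\textsf{g}=(p-1)^2$. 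You instead work inside the function field: since $\varphi$ and $\varphi^{*}$ invert $S$ under conjugation, they normalize $M_1$ and $M_2$ separately, hence preserve $\Omega_1$ and $\Omega_2$ individually and act on $\mathbb{F}_p(\cX)^{S}=\mathbb{F}_p(x)$ as scalings $x\mapsto c_i x$; the explicit computation gives $\varphi(x)=-x$, the parity argument ($S$ has odd order, so cannot contain the involution $\varphi^{*}$) gives $\varphi^{*}(x)=-x$, and therefore $\psi$ fixes $\mathbb{F}_{p^2}(x)$ pointwise and lies in $\Gal\big(\mathbb{F}_{p^2}(\cX)|\mathbb{F}_{p^2}(x)\big)=S$. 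Your argument is complete, and you correctly handle the two delicate points: that the automorphism induced by $\varphi^{*}$ on $\cX/S$ scales rather than inverts $x$ (via $M_1\ne M_2$ from Proposition \ref{pr1oct}(iii)), and that the Galois-theoretic conclusion must be drawn over $\mathbb{F}_{p^2}$, where the cover is still Galois with group $S$. As for what each approach buys: yours is more elementary and explicit, avoiding Nakajima's bounds and any $p$-rank computation at this stage, and it even pins down the action $\varphi^{*}(x)=-x$; on the other hand it is tailored to the specific element $\psi=\varphi\varphi^{*}$ and to the explicit model (\ref{fiber}), whereas the paper's argument uses only that $\psi$ centralizes $S$ and that the short orbits of $S$ have size $p$, so it effectively proves the stronger statement that the centralizer of $C_p\times C_p$ in the full automorphism group is $C_p\times C_p$ itself.
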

\begin{proof}
Assume on the contrary $\psi \notin C_p \times C_p$. Replace $\psi$ by a suitable power of $\psi$ so that the order of $\psi$ is a prime number $m$.

We show that the quotient curve $\bar{\cX}=\cX/\langle \psi \rangle$ has non-zero $p$-rank. Assume on the contrary that this $p$-rank $\bar{\g}$ is zero. Since $\psi\not\in S$ but $\psi$ commutes with all elements of $S=C_p \times C_p$, $\Aut_{\mathbb{F}_p}(\bar{\cX})$ has a subgroup $\bar{S}$ isomorphic to $S$ that acts on the points of $\bar{\cX}$ as $S$ does on the set of the orbits of $\langle \psi \rangle$ lying over those points. From $\bar{\g}=0$, the curve $\bar{\cX}$ has a point $\bar{P}$ fixed by $\bar{S}$; see \cite[Lemma 11.129]{HKT}. Let $\Lambda$ be the orbit of $\langle \psi \rangle$ lying over the point $\bar{P}$. If $\Lambda$ had size $1$, then $S$ would have an orbit of size $1$, which is not possible all short orbits of $S$ having size $p$. Thus $\Lambda$ has size $m$, and since $S$ permutes the points of $\Lambda$, this yields $p|m$, that is, $p=m$. But then $\Aut_{\mathbb{F}_p}(\cX)$ has a $p$-subgroup of order $p^3$, contradicting Nakajima's bound \cite[Theorem 1(i)]{nakajima1987}. Therefore $\bar{\g} \neq 0$. By \cite[Theorem 1(ii)]{nakajima1987}, we also have that $\bar{\g} \neq 1$. Hence $\bar{\g}\geq 2$. Note that since $\textsf{g}>1$, the Riemann-Hurwitz formula (\ref{rhg}) yields $\textsf{g} > \bar{\textsf{g}}$, where $\bar{\textsf{g}}$ is the genus of $\bar{\cX}$. Hence $(p-1)^2=\g=\textsf{g} > \bar{\textsf{g}} \geq \bar{\g}$, and therefore
\begin{equation}\label{eqg}
p^2>\frac{p}{p-2}(\bar{\g}-1).
\end{equation}
On the other hand, $\bar{\g} \geq 2$, and Nakajima's bound \cite[Theorem 1(i)]{nakajima1987} applied to $\bar{\cX}$ gives
$$
p^2 \leq \frac{p}{p-2}(\bar{\g}-1),
$$
contradicting (\ref{eqg}).
\end{proof}
{}From Lemma \ref{lema2dic}, $\varphi \varphi^{*}=\psi \in C_p \times C_p$, which shows that $\varphi \in \Aut_{\mathbb{F}_p}(\cX)$. Therefore $\delta \in \mathbb{F}_p$ whence $b=0$. By (\ref{we}), $\mathbb{F}_p(\cX)=\mathbb{F}_p(y,z)$, where
$$
(z^p-z)(y^p-y)-(z^p-z)^2=a.
$$
This equation is the same as (\ref{31octa}) up to the formal replacement $z/a$ with $x$ and $y-z$ with $y$. Therefore, $\cX$ is the Artin-Mumford curve. This completes the proof of Theorem \ref{main}.


\section*{Acknowledgments}
The first author was supported by FAPESP-Brazil, grant 2014/03497-6. The second author was partially supported by the Italian Ministry MURST, Strutture geometriche, combinatoria e loro applicazioni, PRIN 2013-16.

	\vspace{0,5cm}\noindent {\em Authors' addresses}:

\vspace{0.2 cm} \noindent Nazar ARAKELIAN \\
Instituto de Matem\'atica, Estat\'istica e Computa\c c\~ao Cient\'ifica
\\ Universidade Estadual de Campinas \\ Rua S\'ergio Buarque de Holanda, 651 \\
CEP 13083-859, Campinas SP
(Brazil).\\
 E--mail: {\tt nazar@ime.unicamp.br}

\text{}

\vspace{0.2cm}\noindent G\'abor KORCHM\'AROS\\ Dipartimento di
Matematica\\ Universit\`a Degli Studi Della Basilicata\\ Contrada Macchia
Romana\\ 85100 Potenza (Italy).\\E--mail: {\tt
gabor.korchmaros@unibas.it }

    \end{document}